\theoremstyle{plain}
\newtheorem{theorem}{Theorem}[section]
\newtheorem{lemma}[theorem]{Lemma}
\theoremstyle{definition}
\newtheorem{corollary}[theorem]{Corollary}
\theoremstyle{remark}
\numberwithin{equation}{section}
\newcolumntype{?}{!{\vrule width 1.4pt}}
\def\correspondingauthor{\footnote{Corresponding author.}}
\renewenvironment{abstract}
               {\list{}{\rightmargin\leftmargin}%
                \item[\textbf{\hspace{8.6mm}Abstract ---}]\relax}
               {\endlist}
\DeclareUrlCommand{\url}{%
    \def\UrlLeft##1\UrlRight{\underline{##1}}}
\date{}
\title{An Application of Jackson's $(p, q)$-Derivative to a Subclass of Starlike Functions with Negative Coefficients}
\author{Feras Yousef$^{1,}$\correspondingauthor{}\,\,,\, Amal Al-Shible$^{2}$,\, Sibel Yal\c{c}in$^{3}$ \vspace{0.1in}\\
\footnotesize{$^{1,2}$Department of Mathematics, The University of Jordan, Amman 11942, Jordan }  \\
\footnotesize{$^{3}$Department of Mathematics, Faculty of Arts and Science, Uludag University, TR-16059 Bursa, Turkey}  \\
 \footnotesize{e-mail: $^1$fyousef@ju.edu.jo}, \,$^2$amalalshebli@yahoo.com, \,$^3$syalcin@uludag.edu.tr}
\begin{document}
\maketitle
\begin{abstract} In this paper, we introduce and investigate the subclass $\mathcal{P}_{p,q}^{\xi ,\kappa}(\tau, \eta)$ of starlike functions with negative coefficients by using the differential operator $\Upsilon_{\tau ,p,q}^{\xi ,\kappa}$. Coefficient inequalities, growth and distortion theorems, closure theorems, and some properties of several functions belonging to this class are obtained. We also determine the radii of close-to-convexity, starlikeness, and convexity for functions belonging to the class $\mathcal{P}_{p,q}^{\xi ,\kappa}(\tau, \eta)$. Furthermore, we obtain the integral means inequality and neighborhood results for functions belonging to the class $\mathcal{P}_{p,q}^{\xi ,\kappa}(\tau, \eta)$. The results presented in this paper generalize or improve those in related works of several earlier authors. \\
\end{abstract}

{\bf Keywords:} Analytic functions; Starlike and convex functions; Close-to-convex functions; Integral means; Neighborhoods; Coefficient bounds.\\

{\bf 2010 Mathematics Subject Classification.}  Primary 30C45; Secondary 26A24.

\section{Introduction and definitions}
Let $\mathcal{A}$ denote the class of all analytic functions $\varphi$ defined in the open unit disk $\Omega=\{z\in\mathbb{C}:\left\vert z\right\vert <1\}$ and normalized by the conditions $\varphi(0)=0$ and $\varphi^{\prime}(0)=1$. Thus
each $\varphi\in\mathcal{A}$ has a Taylor-Maclaurin series expansion of the form:
\begin{equation} \label{ieq1}
\varphi(z)=z+\sum\limits_{\nu=2}^{\infty}c_{\nu}z^{\nu}, \ \  (z \in\Omega).
\end{equation}

Further, let $\mathcal{S}$ denote the class of all functions $\varphi \in\mathcal{A}$ which are univalent in $\Omega$.

The quantum calculus (abbreviated by $q$-calculus) is one of the important tools that is used to investigate subclasses of analytic functions. Kanas and R\u{a}ducanu \cite{I9} have used the fractional $q$-calculus operators in investigations of certain classes of functions which are analytic in $\Omega.$ Recently, the area of $q$-calculus has attracted the serious attention of researchers and several new operators have been proposed. This great interest is due to its applications in various branches of mathematics and physics, as for example, in the areas of ordinary fractional calculus, optimal control problems, quantum physics, operator theory, and q-transform analysis. The application of $q$-calculus was initiated by Jackson \cite{I7}. He was the first to develop $q$-integral and $q$-derivative in a systematic way. Later, geometrical interpretation of $q$-analysis has been recognized through studies on quantum groups. Simply, the $q$-calculus is classical calculus without the notion of limits. A comprehensive study on applications of $q$-calculus in operator theory may be found in \cite{I8}, see also \cite{AD1,AD2}.

The $q$-calculus is based on one parameter, the generalization of $q$-calculus is the post-quantum calculus (abbreviated by $(p, q)$-calculus). The $q$-calculus may be obtained by substituting $p = 1$ in $(p, q)$-calculus. We suppose throughout this section that $0<p<q\leq 1$. For the convenience, some basic definitions and notations of $(p, q)$-calculus are mentioned below:

For $0<p<q\leq 1$ the Jackson's $(\mathit{p,q)}$\textit{-derivative} of a
function $\varphi\in \mathcal{A}$ is, by definition, given as follows \cite{I7}
\begin{equation}
\mathbf{D}_{p,q}\varphi(z)=\left\{
\begin{array}{lcl}
\dfrac{\varphi(pz)-\varphi(qz)}{(p-q)z} & for & z\neq 0, \\
\varphi^{\prime }(0) & for & z=0.%
\end{array}%
\right.  \label{in5}
\end{equation}

From (\ref{in5}), we have%
\begin{equation}
\mathbf{D}_{p,q}\varphi(z)=1+\sum\limits_{\nu=2}^{\infty }[\nu]_{p,q}c_{\nu}z^{\nu-1},  \label{dqf}
\end{equation}%
where%
\begin{equation}
\lbrack \nu]_{p,q}=p^{\nu-1}+p^{\nu-2}q+p^{\nu-3}q^2+\cdots+pq^{\nu-2}+q^{\nu-1}= \frac{p^{\nu}-q^{\nu}}{p-q},  \label{in6}
\end{equation}%
is called $(p, q)$-\textit{bracket} or \textit{twin-basic number}. Note that for $p=1$, the twin-basic number is a natural generalization of the $q$-number, that is
\begin{equation*}
\lbrack \nu]_{1,q}= \frac{1-q^{\nu}}{1-q}= \lbrack \nu]_{q}, \ \ q\neq1.  \label{in6}
\end{equation*}%

Note also that for $p=1$, the Jackson's $(p,q)$-derivative reduces to the
Jackson's $q$-derivative, see \cite{I7}.

Its clearly verified that for a function $\psi(z)=z^{\nu},$ we obtain $\mathbf{D}_{p,q}\psi(z)=\mathbf{D}_{p,q}z^{\nu}=\frac{%
p^{\nu}-q^{\nu}}{p-q}z^{\nu-1}=[\nu]_{p,q}z^{\nu-1}$.

For $\varphi\in \mathcal{A},$ we define the S\u{a}l\u{a}gean $(p,q)$-differential operator as follows:%
\begin{eqnarray}
\Psi_{p,q}^{0}\varphi(z) &=&\varphi(z),  \notag  \\
\Psi_{p,q}^{1}\varphi(z) &=&z\mathbf{D}_{p,q}\varphi(z), \notag \\
& \vdots \notag \\
\Psi_{p,q}^{\kappa}\varphi(z) &=&\Psi_{p,q}^{1}\big(\Psi%
_{p,q}^{\kappa-1}\varphi(z)\big)  \notag \\
 &=&z+\sum\limits_{\nu=2}^{\infty
}[\nu]_{p,q}^{\kappa}c_{\nu}z^{\nu}\quad (\kappa\in \mathbb{N}_{0}:=\mathbb{N\cup \{}0\mathbb{%
\}}; z\in \Omega).
\end{eqnarray}%

We observe that if $p=1$ and $\lim_{q}\rightarrow 1^{-},$ we obtain the familiar S\u{a}l\u{a}gean differential operator \cite{sal}:
\begin{equation}
\Psi^{\kappa}\varphi(z)=z+\sum\limits_{\nu=2}^{\infty }\nu^{\kappa}c_{\nu}z^{\nu}\quad (\kappa\in \mathbb{N}%
_{0}; z\in \Omega ).  \label{gsala}
\end{equation}

Now let%
\begin{eqnarray}
\Upsilon_{\tau ,p,q}^{0,\kappa}\varphi(z)&=&\Psi_{p,q}^{\kappa}\varphi(z), \notag \\
\Upsilon_{\tau ,p,q}^{1,\kappa}\varphi(z) &=&(1-\tau )\Psi%
_{p,q}^{\kappa}\varphi(z)+\tau z\big(\Psi_{p,q}^{\kappa}\varphi(z)\big)^{\prime } \label{rq1} \\
&=&z+\sum\limits_{\nu=2}^{\infty }[\nu]_{p,q}^{\kappa}[1+(\nu-1)\tau ]c_{\nu}z^{\nu}, \notag \\
\Upsilon_{\tau ,p,q}^{2,\kappa}\varphi(z) &=&(1-\tau )\Upsilon_{\tau
,p,q}^{1,\kappa}\varphi(z)+\tau z\big(\Upsilon_{\tau ,p,q}^{1,\kappa}\varphi(z)\big)^{\prime }
\notag \\
&=&z+\sum\limits_{\nu=2}^{\infty }[\nu]_{p,q}^{\kappa}[1+(\nu-1)\tau ]^{2}c_{\nu}z^{\nu}.
\end{eqnarray}%

In general, we have%
\begin{eqnarray*}
\Upsilon_{\tau ,p,q}^{\xi ,\kappa}\varphi(z) &=&(1-\tau )\Upsilon_{\tau
,p,q}^{\xi -1,\kappa}\varphi(z)+\tau z\big(\Upsilon_{\tau ,p,q}^{\xi
-1,\kappa}\varphi(z)\big)^{\prime } \\
&=&z+\sum\limits_{\nu=2}^{\infty }[\nu]_{p,q}^{\kappa}[1+(\nu-1)\tau ]^{\xi
}c_{\nu}z^{\nu}\ \ \ \ (\tau \geq 0;\text{ } \kappa,\xi \in \mathbb{N}_{0}).
\end{eqnarray*}%

Clearly, we have $\Upsilon_{\tau ,p,q}^{0,0}\varphi(z)=\varphi(z)$ and $\Upsilon%
_{1,p,q}^{1,0}\phi(z)=z\varphi^{\prime }(z).$

We observe that when $p=1,$ we get the differential operator $\Upsilon_{\tau,q}^{\xi ,\kappa}\varphi(z)$ defined and studied by Frasin and
Murugusundaramoorthy \cite{fran}. Also, we observe that when $p=1$ and $\lim_{q\rightarrow 1^{-}},$ we get the differential operator:%
\begin{equation*}
\Upsilon_{\tau }^{\xi ,\kappa}\varphi(z)=z+\sum\limits_{\nu=2}^{\infty
}\nu^{\kappa}[1+(\nu-1)\tau ]^{\xi }c_{\nu}z^{\nu}\ \ \ \ (\tau \geq 0;\text{ }%
\kappa,\xi \in \mathbb{N}_{0}).
\end{equation*}%

We note that when $\kappa=0$, we get the differential operator $\Upsilon_{\tau}^{\xi }$
defined by Al-Oboudi \cite{ob}, and if $\xi =0$, we get S\u{a}l\u{a}gean
differential operator $\Upsilon^{\kappa}$ \cite{sal}.

With the aid of the differential operator $\Upsilon_{\tau,p,q}^{\xi ,\kappa},$ we say that a function $\varphi(z)$ belonging to $\mathcal{A}$
is in the class $\mathcal{Q}_{p,q}^{\xi ,\kappa}(\tau, \eta)$ if and only if%
\begin{equation}
{\mbox{Re}}\left\{ \frac{(1-\tau)z\big(\Upsilon_{\tau
,p,q}^{\xi ,\kappa}\varphi(z)\big)^{\prime}+\tau z\big(\Upsilon_{\tau
,p,q}^{\xi ,\kappa+1}\varphi(z)\big)^{\prime}}{(1-\tau)\Upsilon_{\tau ,p,q}^{\xi ,\kappa}\varphi(z)+\tau\Upsilon_{\tau ,p,q}^{\xi ,\kappa+1}\varphi(z)}%
 \right\} >\eta \ \ \ \ (\kappa,\xi \in \mathbb{N}_{0}),  \label{s}
\end{equation}%
for some $\eta(0\leq\eta<1)$ and $\tau(\tau \geq 0),$ and for all $z\in \Omega$.

Let $\mathcal{T}$ denote the subclass of $\mathcal{A}$ consisting functions of the form:%
\begin{equation}
\varphi(z)=z-\sum\limits_{\nu=2}^{\infty }c_{\nu}z^{\nu} \ \ \ \ (c_{\nu}\geq 0; z\in \Omega).
\label{ieq1n}
\end{equation}%

Further, we define the class $\mathcal{P}_{p,q}^{\xi ,\kappa}(\tau, \eta)$ by%
\begin{equation}
\mathcal{P}_{p,q}^{\xi ,\kappa}(\tau, \eta):=\mathcal{Q}_{p,q}^{\xi ,\kappa}(\tau, \eta)\cap \mathcal{T}.
\end{equation}%

The present paper aims at providing a systematic investigation of the various interesting properties and characteristics of the class $\mathcal{P}_{p,q}^{\xi ,\kappa}(\tau, \eta)$. Some interesting corollaries and consequences of the main results are also considered. We employ techniques similar to these used earlier by Al-Hawary et al. \cite{A1,A4}, Amourah et al. \cite{A2,C4}, Aouf and Srivastava \cite{A5}, and Frasin et al. \cite{A3}.

\section{Coefficient estimates}
We begin this section by obtaining a necessary and sufficient condition for a function $\varphi(z)$ to be in the class $\mathcal{P}_{p,q}^{\xi ,\kappa}(\tau, \eta)$.
\begin{theorem}
\label{thm21} Let the function $\varphi(z)$ be defined by (\ref{ieq1n}). Then $\varphi(z) \in \mathcal{P}_{p,q}^{\xi ,\kappa}(\tau, \eta)$ if and only if
\begin{equation} \label{theq21}
\sum\limits_{\nu=2}^{\infty }[\nu]_{p,q}^{\kappa}(\nu-\eta)\big\{1+([\nu]_{p,q}-1)\tau\big\}[1+(\nu-1)\tau ]^{\xi} c_{\nu}\leq 1-\eta.
\end{equation}

The result is sharp.
\end{theorem}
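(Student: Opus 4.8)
The plan is to prove the two implications separately and then exhibit an extremal function. Throughout, write $N(z)$ and $D(z)$ for the numerator and denominator appearing in (\ref{s}), and abbreviate $A_\nu:=[\nu]_{p,q}^{\kappa}[1+(\nu-1)\tau]^{\xi}\{1+([\nu]_{p,q}-1)\tau\}$, so that the asserted inequality (\ref{theq21}) reads $\sum_{\nu=2}^{\infty}(\nu-\eta)A_\nu c_\nu\le 1-\eta$. Substituting the series (\ref{ieq1n}) into $\Upsilon_{\tau,p,q}^{\xi,\kappa}\varphi$ and $\Upsilon_{\tau,p,q}^{\xi,\kappa+1}\varphi$ and collecting the combination $(1-\tau)(\cdot)+\tau[\nu]_{p,q}(\cdot)=1+([\nu]_{p,q}-1)\tau$, a direct computation gives $N(z)=z-\sum_{\nu=2}^{\infty}\nu A_\nu c_\nu z^{\nu}$ and $D(z)=z-\sum_{\nu=2}^{\infty}A_\nu c_\nu z^{\nu}$. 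This reduction is the key bookkeeping step; once it is in place, both directions follow from elementary estimates.

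For sufficiency I would use the equivalence $\mbox{Re}(\zeta)>0\iff|\zeta+1|>|\zeta-1|$ applied to $\zeta=(N-\eta D)/D$, which reduces the membership condition $\mbox{Re}(N/D)>\eta$ to $|N+(1-\eta)D|>|N-(1+\eta)D|$. Computing the two combinations gives $N+(1-\eta)D=(2-\eta)z-\sum_{\nu=2}^{\infty}(\nu+1-\eta)A_\nu c_\nu z^{\nu}$ and $N-(1+\eta)D=-\eta z-\sum_{\nu=2}^{\infty}(\nu-1-\eta)A_\nu c_\nu z^{\nu}$, where $\nu-1-\eta\ge 1-\eta>0$ for $\nu\ge 2$. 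Estimating the first from below and the second from above by the triangle inequality, and using $|z|^{\nu}\le|z|$ for $|z|<1$ together with the nonnegativity of the coefficients, I obtain
\[
|N+(1-\eta)D|-|N-(1+\eta)D|\ \ge\ 2|z|\Big[(1-\eta)-\sum_{\nu=2}^{\infty}(\nu-\eta)A_\nu c_\nu\Big],
\]
and the bracket is nonnegative precisely under hypothesis (\ref{theq21}); hence $\varphi\in\mathcal{P}_{p,q}^{\xi,\kappa}(\tau,\eta)$.

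For necessity I would restrict to $z=r\in(0,1)$. Since all coefficients are real, $N(r)/D(r)$ is real, so the defining condition becomes $N(r)/D(r)>\eta$; noting that $D(r)>0$ and clearing the positive denominator yields $(1-\eta)>\sum_{\nu=2}^{\infty}(\nu-\eta)A_\nu c_\nu\,r^{\nu-1}$. Letting $r\to 1^{-}$ and invoking monotone convergence of the nonnegative series delivers exactly (\ref{theq21}). Sharpness is then witnessed by the single-term functions $\varphi_\nu(z)=z-\dfrac{1-\eta}{(\nu-\eta)A_\nu}\,z^{\nu}$ for $\nu\ge 2$, for which $(\nu-\eta)A_\nu c_\nu=1-\eta$, so that equality holds in (\ref{theq21}). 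The main obstacle I anticipate is purely organizational: carrying the factor $1+([\nu]_{p,q}-1)\tau$ correctly through the linear combination of the two operators, and justifying $D(r)>0$ up to the limit $r\to 1^{-}$ so that the cross-multiplication in the necessity argument is legitimate.
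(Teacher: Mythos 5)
Your proposal is correct and follows the same overall strategy as the paper: the identical series bookkeeping reducing the numerator and denominator to $N(z)=z-\sum_{\nu\ge 2}\nu A_\nu c_\nu z^{\nu}$ and $D(z)=z-\sum_{\nu\ge 2}A_\nu c_\nu z^{\nu}$ (with your abbreviation $A_\nu$), the passage $r\to 1^{-}$ along the real axis for necessity, and the same single-term extremal functions for sharpness. The one place you genuinely diverge is the sufficiency step: the paper bounds $\left\vert N/D-1\right\vert$ directly by the triangle inequality and concludes that the values lie in the disk of radius $1-\eta$ centered at $w=1$, which sits inside the half-plane $\mbox{Re}(w)\ge \eta$; you instead invoke $\mbox{Re}(\zeta)>0\iff\vert\zeta+1\vert>\vert\zeta-1\vert$ and compare $\vert N+(1-\eta)D\vert$ with $\vert N-(1+\eta)D\vert$. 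Both are standard and elementary; your version has the mild advantage of never dividing by $D$ before knowing it is nonzero (your inequality in fact forces $D\ne 0$), while the paper's is shorter to write, and both share the same cosmetic issue of delivering the non-strict inequality $\mbox{Re}\ge\eta$ unless one notes that $\vert z\vert^{\nu}<\vert z\vert$ strictly for $\nu\ge 2$. The obstacle you flag in the necessity direction, namely justifying $D(r)>0$ so that cross-multiplication is legitimate, is real but is equally present and equally glossed over in the paper's own proof; the standard resolution is that $D(r)/r\to 1$ as $r\to 0^{+}$ and $D$ cannot vanish in $\Omega$ if the defining quotient in the class condition is to make sense, so $D(r)>0$ on $(0,1)$ by continuity.
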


\begin{proof}
Assume that the function $\varphi(z)$ is in the class $\mathcal{P}_{p,q}^{\xi ,\kappa}(\tau, \eta)$. Then we have
\begin{equation}
{\mbox{Re}}\left\{ \frac{(1-\tau)z\big(\Upsilon_{\tau
,p,q}^{\xi ,\kappa}\varphi(z)\big)^{\prime}+\tau z\big(\Upsilon_{\tau
,p,q}^{\xi ,\kappa+1}\varphi(z)\big)^{\prime}}{(1-\tau)\Upsilon_{\tau ,p,q}^{\xi ,\kappa}\varphi(z)+\tau\Upsilon_{\tau ,p,q}^{\xi ,\kappa+1}\varphi(z)}%
 \right\} = {\mbox{Re}}\left\{ \frac{K}{L} \right\} >\eta\ \ \ \ (\kappa,\xi \in \mathbb{N}_{0}),
\end{equation}%
for some $\eta(0\leq\eta<1)$ and $\tau(\tau \geq 0),$ and for all $z\in \Omega$. Now
\begin{eqnarray}
\hspace{-.5in} K &=&(1-\tau )\left(z-\sum\limits_{\nu=2}^{\infty }\nu[\nu]_{p,q}^{\kappa}[1+(\nu-1)\tau ]^{\xi
}c_{\nu}z^{\nu}\right)+\tau \left(z-\sum\limits_{\nu=2}^{\infty}\nu[\nu]_{p,q}^{\kappa+1}[1+(\nu-1)\tau ]^{\xi}c_{\nu}z^{\nu}\right)  \notag \\
&=&z-\sum\limits_{\nu=2}^{\infty }\nu[\nu]_{p,q}^{\kappa}\big\{1+([\nu]_{p,q}-1)\tau\big\}[1+(\nu-1)\tau ]^{\xi} c_{\nu}z^{\nu},
\end{eqnarray}%
and
\begin{eqnarray}
\hspace{-.5in} L &=&(1-\tau )\left(z-\sum\limits_{\nu=2}^{\infty }[\nu]_{p,q}^{\kappa}[1+(\nu-1)\tau ]^{\xi
}c_{\nu}z^{\nu}\right)+\tau \left(z-\sum\limits_{\nu=2}^{\infty}[\nu]_{p,q}^{\kappa+1}[1+(\nu-1)\tau ]^{\xi}c_{\nu}z^{\nu}\right)  \notag  \\
&=&z-\sum\limits_{\nu=2}^{\infty }[\nu]_{p,q}^{\kappa}\big\{1+([\nu]_{p,q}-1)\tau\big\}[1+(\nu-1)\tau ]^{\xi} c_{\nu}z^{\nu}.
\end{eqnarray}%

Consequently,
\begin{eqnarray}
{\mbox{Re}}\left\{ \frac{(1-\tau)z\big(\Upsilon_{\tau
,p,q}^{\xi ,\kappa}\varphi(z)\big)^{\prime}+\tau z\big(\Upsilon_{\tau
,p,q}^{\xi ,\kappa+1}\varphi(z)\big)^{\prime}}{(1-\tau)\Upsilon_{\tau ,p,q}^{\xi ,\kappa}\varphi(z)+\tau\Upsilon_{\tau ,p,q}^{\xi ,\kappa+1}\varphi(z)}%
 \right\} \hspace{2in}  \\ \nonumber
 = {\mbox{Re}}\left\{ \frac{z-\sum\limits_{\nu=2}^{\infty }\nu[\nu]_{p,q}^{\kappa}\big\{1+([\nu]_{p,q}-1)\tau\big\}[1+(\nu-1)\tau ]^{\xi} c_{\nu}z^{\nu}}{z-\sum\limits_{\nu=2}^{\infty }[\nu]_{p,q}^{\kappa}\big\{1+([\nu]_{p,q}-1)\tau\big\}[1+(\nu-1)\tau ]^{\xi} c_{\nu}z^{\nu}} \right\} >\eta.
\end{eqnarray}%

Letting $z\rightarrow 1^{-}$ along the real axis, we can see that
\begin{eqnarray}
1-\sum\limits_{\nu=2}^{\infty }\nu[\nu]_{p,q}^{\kappa}\big\{1+([\nu]_{p,q}-1)\tau\big\}[1+(\nu-1)\tau ]^{\xi} c_{\nu} \hspace{2in}  \\ \nonumber
\geq \eta \left(1-\sum\limits_{\nu=2}^{\infty }[\nu]_{p,q}^{\kappa}\big\{1+([\nu]_{p,q}-1)\tau\big\}[1+(\nu-1)\tau ]^{\xi} c_{\nu}\right).
\end{eqnarray}%

Thus we have the inequality (\ref{theq21}).

Conversely, assume that the inequality (\ref{theq21}) holds true. Then we find that
\begin{eqnarray*}
&& \hspace{-1.2in} \left\vert \frac{(1-\tau)z\big(\Upsilon_{\tau
,p,q}^{\xi ,\kappa}\varphi(z)\big)^{\prime}+\tau z\big(\Upsilon_{\tau
,p,q}^{\xi ,\kappa+1}\varphi(z)\big)^{\prime}}{(1-\tau)\Upsilon_{\tau ,p,q}^{\xi ,\kappa}\varphi(z)+\tau\Upsilon_{\tau ,p,q}^{\xi ,\kappa+1}\varphi(z)} - 1 \right\vert \\
&\leq& \frac{\sum\limits_{\nu=2}^{\infty }[\nu]_{p,q}^{\kappa} (\nu-1)\big\{1+([\nu]_{p,q}-1)\tau\big\}[1+(\nu-1)\tau ]^{\xi} c_{\nu}|z|^{\nu-1}}{1-\sum\limits_{\nu=2}^{\infty }[\nu]_{p,q}^{\kappa}\big\{1+([\nu]_{p,q}-1)\tau\big\}[1+(\nu-1)\tau ]^{\xi} c_{\nu}|z|^{\nu-1}} \\
&\leq& \frac{\sum\limits_{\nu=2}^{\infty }[\nu]_{p,q}^{\kappa} (\nu-1)\big\{1+([\nu]_{p,q}-1)\tau\big\}[1+(\nu-1)\tau ]^{\xi} c_{\nu}}{1-\sum\limits_{\nu=2}^{\infty }[\nu]_{p,q}^{\kappa}\big\{1+([\nu]_{p,q}-1)\tau\big\}[1+(\nu-1)\tau ]^{\xi} c_{\nu}} \\
&\leq& 1- \eta.
\end{eqnarray*}%

This shows that the values of the function
\begin{eqnarray}
\varphi (z)= \frac{(1-\tau)z\big(\Upsilon_{\tau
,p,q}^{\xi ,\kappa}\varphi(z)\big)^{\prime}+\tau z\big(\Upsilon_{\tau
,p,q}^{\xi ,\kappa+1}\varphi(z)\big)^{\prime}}{(1-\tau)\Upsilon_{\tau ,p,q}^{\xi ,\nu}\varphi(z)+\tau\Upsilon_{\tau ,p,q}^{\xi ,\nu+1}\varphi(z)}
\end{eqnarray}%
lie in a circle which is centered at $w=1$ and whose radius is $1-\eta$. Hence $\varphi(z)$ satisfies the condition (\ref{s}).

Finally, the function $\varphi(z)$ given by
\begin{eqnarray}
\varphi(z)=z-\frac{1-\eta }{[\nu]_{p,q}^{\kappa}(\nu-\eta)\big\{1+([\nu]_{p,q}-1)\tau\big\}[1+(\nu-1)\tau ]^{\xi}}z^{\nu} \ \ \ \ (\nu \geq 2) \label{fun1}
\end{eqnarray}%
is an extremal function for the assertion of Theorem \ref{thm21}. This completes the proof of Theorem \ref{thm21}.
\end{proof}

\begin{corollary} 
Let the function $\varphi(z)$, defined by (\ref{ieq1n}), be in the class $\mathcal{P}_{p,q}^{\xi ,\kappa}(\tau, \eta)$. Then
\begin{eqnarray}
c_{\nu} \leq \frac{1-\eta }{[\nu]_{p,q}^{\kappa}(\nu-\eta)\big\{1+([\nu]_{p,q}-1)\tau\big\}[1+(\nu-1)\tau ]^{\xi}} \quad \ \ \ (\nu \geq 2). \label{co1}
\end{eqnarray}%

The equality in (\ref{co1}) is achieved for the function $\varphi(z)$ given by (\ref{fun1}).
\end{corollary}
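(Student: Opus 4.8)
The plan is to derive this Corollary as an immediate consequence of the necessary condition established in Theorem~\ref{thm21}. Since $\varphi(z)\in\mathcal{P}_{p,q}^{\xi,\kappa}(\tau,\eta)$, Theorem~\ref{thm21} guarantees that the coefficients satisfy
\[
\sum\limits_{\nu=2}^{\infty}[\nu]_{p,q}^{\kappa}(\nu-\eta)\big\{1+([\nu]_{p,q}-1)\tau\big\}[1+(\nu-1)\tau]^{\xi}c_{\nu}\leq 1-\eta.
\]
First I would observe that every factor appearing in each summand is strictly positive: for $\nu\geq 2$ and $0<p<q\leq 1$ we have $[\nu]_{p,q}>0$, while $\nu-\eta>0$ since $0\leq\eta<1$, and $1+([\nu]_{p,q}-1)\tau>0$ together with $[1+(\nu-1)\tau]^{\xi}>0$ because $\tau\geq 0$; finally $c_{\nu}\geq 0$ by the defining representation~(\ref{ieq1n}) of the subclass $\mathcal{T}$.

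The key step is then the standard term-by-term domination argument: because the series consists of non-negative terms, any single term of the sum is bounded above by the entire sum. Thus, for each fixed index $\nu\geq 2$,
\[
[\nu]_{p,q}^{\kappa}(\nu-\eta)\big\{1+([\nu]_{p,q}-1)\tau\big\}[1+(\nu-1)\tau]^{\xi}c_{\nu}\leq\sum\limits_{m=2}^{\infty}[m]_{p,q}^{\kappa}(m-\eta)\big\{1+([m]_{p,q}-1)\tau\big\}[1+(m-1)\tau]^{\xi}c_{m}\leq 1-\eta.
\]
Dividing through by the positive quantity $[\nu]_{p,q}^{\kappa}(\nu-\eta)\big\{1+([\nu]_{p,q}-1)\tau\big\}[1+(\nu-1)\tau]^{\xi}$ yields precisely the bound~(\ref{co1}).

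For the sharpness assertion, I would substitute the candidate extremal function~(\ref{fun1}) into the coefficient inequality of Theorem~\ref{thm21}: this function has a single nonzero coefficient, namely $c_{\nu}=\dfrac{1-\eta}{[\nu]_{p,q}^{\kappa}(\nu-\eta)\{1+([\nu]_{p,q}-1)\tau\}[1+(\nu-1)\tau]^{\xi}}$, and plugging it in collapses the series to the single term $1-\eta$, so the defining inequality holds with equality, confirming membership in the class and simultaneously showing that the bound in~(\ref{co1}) is attained. I do not anticipate any genuine obstacle here; the only point requiring minimal care is the verification of positivity of all factors so that division preserves the direction of the inequality, which is routine given the ranges of the parameters.
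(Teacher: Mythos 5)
Your proposal is correct and is exactly the argument the paper intends: the corollary is stated as an immediate consequence of Theorem~\ref{thm21}, obtained by the standard term-by-term domination of a single nonnegative summand by the full series, followed by division by the positive coefficient factor, with sharpness witnessed by the one-term extremal function (\ref{fun1}). The paper gives no explicit proof, so your write-up simply makes the routine details explicit.
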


\section{Inclusion relations}
We begin this section by showing the following inclusion relation.
\begin{theorem}
\label{thm22} Let \ $0\leq\eta_1\leq\eta_2<1$, $0\leq\tau\leq1$, and $\kappa,\xi \in \mathbb{N}_{0}$. Then
\begin{equation} \label{theq22}
\mathcal{P}_{p,q}^{\xi ,\kappa}(\tau, \eta_1)\supseteq\mathcal{P}_{p,q}^{\xi ,\kappa}(\tau, \eta_2).
\end{equation}
\end{theorem}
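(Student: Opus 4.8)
The plan is to reduce everything to the coefficient criterion of Theorem \ref{thm21} and then argue by a term-by-term comparison of the two resulting inequalities. First I would take an arbitrary $\varphi(z)=z-\sum_{\nu=2}^{\infty}c_{\nu}z^{\nu}\in\mathcal{P}_{p,q}^{\xi,\kappa}(\tau,\eta_2)$ and apply Theorem \ref{thm21} with $\eta=\eta_2$ to obtain
\begin{equation*}
\sum_{\nu=2}^{\infty}[\nu]_{p,q}^{\kappa}(\nu-\eta_2)\big\{1+([\nu]_{p,q}-1)\tau\big\}[1+(\nu-1)\tau]^{\xi}c_{\nu}\leq 1-\eta_2.
\end{equation*}
The goal is to verify the same inequality with $\eta_2$ replaced by $\eta_1$, for then Theorem \ref{thm21}, now applied with $\eta=\eta_1$, places $\varphi$ in $\mathcal{P}_{p,q}^{\xi,\kappa}(\tau,\eta_1)$, which is exactly the asserted inclusion.

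To compare the two inequalities cleanly I would divide each through by its right-hand side (both $1-\eta_1$ and $1-\eta_2$ are positive) and abbreviate $B_{\nu}:=[\nu]_{p,q}^{\kappa}\big\{1+([\nu]_{p,q}-1)\tau\big\}[1+(\nu-1)\tau]^{\xi}c_{\nu}$. It is important here to record that $B_{\nu}\geq 0$: the factor $1+([\nu]_{p,q}-1)\tau=(1-\tau)+\tau[\nu]_{p,q}$ is nonnegative precisely because $0\leq\tau\leq 1$ and $[\nu]_{p,q}>0$, while $[\nu]_{p,q}^{\kappa}>0$, $[1+(\nu-1)\tau]^{\xi}\geq 0$, and $c_{\nu}\geq 0$. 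With this notation the $\eta_2$-hypothesis reads $\sum_{\nu}\frac{\nu-\eta_2}{1-\eta_2}B_{\nu}\leq 1$, and what must be shown is $\sum_{\nu}\frac{\nu-\eta_1}{1-\eta_1}B_{\nu}\leq 1$.

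The crux is the elementary monotonicity fact that, for each fixed $\nu\geq 2$, the map $\eta\mapsto\frac{\nu-\eta}{1-\eta}$ is nondecreasing on $[0,1)$; indeed its derivative equals $\frac{\nu-1}{(1-\eta)^{2}}\geq 0$. Consequently $\frac{\nu-\eta_1}{1-\eta_1}\leq\frac{\nu-\eta_2}{1-\eta_2}$ for every $\nu\geq 2$ whenever $\eta_1\leq\eta_2$. Multiplying by $B_{\nu}\geq 0$ and summing yields
\begin{equation*}
\sum_{\nu=2}^{\infty}\frac{\nu-\eta_1}{1-\eta_1}B_{\nu}\leq\sum_{\nu=2}^{\infty}\frac{\nu-\eta_2}{1-\eta_2}B_{\nu}\leq 1,
\end{equation*}
which is exactly the coefficient criterion of Theorem \ref{thm21} for the parameter $\eta_1$. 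Hence $\varphi\in\mathcal{P}_{p,q}^{\xi,\kappa}(\tau,\eta_1)$, and since $\varphi$ was arbitrary, (\ref{theq22}) follows.

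I expect no serious obstacle in this argument: the two points that genuinely require care are the sign bookkeeping that gives $B_{\nu}\geq 0$ (this is where the hypothesis $0\leq\tau\leq 1$ is actually needed, rather than merely $\tau\geq 0$), and the observation that $\nu\geq 2$ is precisely what makes $\frac{\nu-\eta}{1-\eta}$ increasing in $\eta$. The nonnegativity of every summand is essential, since it is what allows the per-term comparison to survive the summation.
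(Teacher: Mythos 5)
Your proof is correct and follows the same overall strategy as the paper: reduce both memberships to the coefficient criterion of Theorem \ref{thm21} and show the $\eta_2$-inequality implies the $\eta_1$-inequality. The only (cosmetic) difference is in the comparison step — you normalize and use the pointwise monotonicity of $\eta\mapsto\frac{\nu-\eta}{1-\eta}$, whereas the paper writes $\nu-\eta_1=(\nu-\eta_2)+\delta$ with $\delta=\eta_2-\eta_1$ and bounds the two resulting sums separately; both are valid, and your explicit justification that each summand is nonnegative (which is where $0\leq\tau\leq1$ enters) is a point the paper leaves implicit.
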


\begin{proof}
Let the function $\varphi(z)$ defined by (\ref{ieq1n}) be in the class $\mathcal{P}_{p,q}^{\xi ,\kappa}(\tau, \eta_2)$ and let $\eta_1=\eta_2-\delta$. Then, by Theorem \ref{thm21}, we have
\begin{equation}
\sum\limits_{\nu=2}^{\infty }[\nu]_{p,q}^{\kappa}(\nu-\eta_2)\big\{1+([\nu]_{p,q}-1)\tau\big\}[1+(\nu-1)\tau ]^{\xi} c_{\nu}\leq 1-\eta_2
\end{equation}
and
\begin{equation}
\sum\limits_{\nu=2}^{\infty }[\nu]_{p,q}^{\kappa}\big\{1+([\nu]_{p,q}-1)\tau\big\}[1+(\nu-1)\tau ]^{\xi} c_{\nu}\leq \frac{1-\eta_2}{2-\eta_2}<1.
\end{equation}

Consequently,
\begin{eqnarray*}
\sum\limits_{\nu=2}^{\infty }[\nu]_{p,q}^{\kappa}(\nu-\eta_1)\big\{1&+&([\nu]_{p,q}-1)\tau\big\}[1+(\nu-1)\tau ]^{\xi} c_{\nu}  \\
&=& \sum\limits_{\nu=2}^{\infty }[\nu]_{p,q}^{\kappa}(\nu-\eta_2)\big\{1+([\nu]_{p,q}-1)\tau\big\}[1+(\nu-1)\tau ]^{\xi} c_{\nu}\\
&+& \delta\sum\limits_{\nu=2}^{\infty }[\nu]_{p,q}^{\kappa}\big\{1+([\nu]_{p,q}-1)\tau\big\}[1+(\nu-1)\tau ]^{\xi} c_{\nu} \\
&\leq& 1-\eta_1.
\end{eqnarray*}%

This completes the proof of Theorem \ref{thm22}.
\end{proof}

\begin{theorem}
\label{thm23} Let \ $0\leq\eta<1$, $0\leq\tau_1\leq\tau_2\leq1$, and $\kappa,\xi \in \mathbb{N}_{0}$. Then
\begin{equation} \label{theq23}
\mathcal{P}_{p,q}^{\xi ,\kappa}(\tau_1, \eta)\supseteq\mathcal{P}_{p,q}^{\xi ,\kappa}(\tau_2, \eta).
\end{equation}
\end{theorem}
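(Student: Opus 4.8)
The plan is to convert the set inclusion into a comparison of the two coefficient functionals supplied by Theorem~\ref{thm21}, and then to compare those functionals term by term. First I would fix an arbitrary $\varphi(z)$ of the form (\ref{ieq1n}) belonging to $\mathcal{P}_{p,q}^{\xi,\kappa}(\tau_2,\eta)$. By the necessity part of Theorem~\ref{thm21} this membership is equivalent to
\[
\sum_{\nu=2}^{\infty}[\nu]_{p,q}^{\kappa}(\nu-\eta)\big\{1+([\nu]_{p,q}-1)\tau_2\big\}[1+(\nu-1)\tau_2]^{\xi}c_{\nu}\le 1-\eta .
\]
The goal is to deduce the same estimate with $\tau_2$ replaced by $\tau_1$; once that is in hand, the sufficiency part of Theorem~\ref{thm21} places $\varphi$ in $\mathcal{P}_{p,q}^{\xi,\kappa}(\tau_1,\eta)$, and the inclusion (\ref{theq23}) follows at once.

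For the comparison I would isolate, for each fixed $\nu\ge 2$, the factor that carries the $\tau$-dependence, namely
\[
g_{\nu}(\tau):=\big\{1+([\nu]_{p,q}-1)\tau\big\}[1+(\nu-1)\tau]^{\xi}.
\]
The remaining weight $[\nu]_{p,q}^{\kappa}(\nu-\eta)c_{\nu}$ is non-negative, since $c_{\nu}\ge 0$, $\nu>\eta$, and $[\nu]_{p,q}>0$. Hence if I can show that $g_{\nu}$ is non-decreasing on $[0,1]$, so that $g_{\nu}(\tau_1)\le g_{\nu}(\tau_2)$ whenever $\tau_1\le\tau_2$, then each summand of the $\tau_1$-series is dominated by the corresponding summand of the $\tau_2$-series, and summing over $\nu$ yields precisely the desired bound $\le 1-\eta$.

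It therefore remains to prove the monotonicity of $g_{\nu}$, which I would do by writing it as a product of two non-negative factors and checking that each is non-decreasing in $\tau$. The factor $[1+(\nu-1)\tau]^{\xi}$ is non-decreasing because $\nu-1\ge 1>0$ and $\xi\ge 0$. The factor $1+([\nu]_{p,q}-1)\tau$ is non-decreasing precisely when $[\nu]_{p,q}\ge 1$, and this is the step I expect to require the most care: it is exactly the point at which the admissible range of $p,q$ enters, and I would read the bound $[\nu]_{p,q}\ge 1$ for every $\nu\ge 2$ off the expansion (\ref{in6}) of the twin-basic number. Granting this, both factors are non-negative and non-decreasing, so their product $g_{\nu}$ is non-decreasing on $[0,1]$; the term-by-term domination then holds, and the proof of Theorem~\ref{thm23} is complete.
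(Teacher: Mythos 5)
Your strategy is the same as the paper's: fix $\varphi\in\mathcal{P}_{p,q}^{\xi,\kappa}(\tau_2,\eta)$, use the necessity part of Theorem~\ref{thm21} at $\tau_2$ and its sufficiency part at $\tau_1$, and reduce the whole matter to the term-by-term domination $g_{\nu}(\tau_1)\le g_{\nu}(\tau_2)$ of the $\tau$-dependent factor $g_{\nu}(\tau)=\big\{1+([\nu]_{p,q}-1)\tau\big\}[1+(\nu-1)\tau]^{\xi}$. The paper simply writes this domination into a displayed chain of inequalities without comment; you have correctly isolated it and named the single point on which it hinges.

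That point, however, is a genuine gap rather than a routine verification: the inequality $[\nu]_{p,q}\ge 1$ does \emph{not} follow from the paper's standing hypothesis $0<p<q\le 1$. By (\ref{in6}) one has $[2]_{p,q}=p+q$, which is smaller than $1$ for, say, $p=0.1$ and $q=0.2$; the bound $[\nu]_{p,q}\ge1$ is guaranteed only in specializations such as $p=1$ or $q=1$, where the expansion (\ref{in6}) contains the term $1$. When $[\nu]_{p,q}<1$ the factor $1+([\nu]_{p,q}-1)\tau$ is strictly decreasing in $\tau$, and for $\xi=0$ the whole of $g_{\nu}$ decreases, so the term-by-term comparison runs the wrong way; indeed, taking $\kappa=\xi=\eta=0$, $\tau_2=1$, $\tau_1=0$, $p+q<1$, the function $\varphi(z)=z-\frac{1}{2(p+q)}z^{2}$ satisfies (\ref{theq21}) for $\tau_2$ but violates it for $\tau_1$, so the asserted inclusion itself fails in that parameter range. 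To be clear, the paper's own proof tacitly assumes exactly the same monotonicity and says nothing about it; your write-up is the more honest of the two because it flags the missing ingredient, but as it stands that ingredient cannot be supplied under the stated hypotheses and must instead be added as an assumption (e.g.\ $[\nu]_{p,q}\ge1$ for all $\nu\ge2$, which holds in the $q$-calculus case $p=1$).
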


\begin{proof}
Let the function $\varphi(z)$ defined by (\ref{ieq1n}) be in the class $\mathcal{P}_{p,q}^{\xi ,\kappa}(\tau_2, \eta)$. Then, by Theorem \ref{thm21}, we have
\begin{eqnarray*}
\sum\limits_{\nu=2}^{\infty }[\nu]_{p,q}^{\kappa}(\nu-\eta)\big\{1&+&([\nu]_{p,q}-1)\tau_1\big\}[1+(\nu-1)\tau_1 ]^{\xi} c_{\nu}  \\
&\leq& \sum\limits_{\nu=2}^{\infty }[\nu]_{p,q}^{\kappa}(\nu-\eta)\big\{1+([\nu]_{p,q}-1)\tau_2\big\}[1+(\nu-1)\tau_2 ]^{\xi} c_{\nu}\\
&\leq& 1-\eta.
\end{eqnarray*}%

This completes the proof of Theorem \ref{thm23}.
\end{proof}

\begin{theorem}
\label{thm24} Let \ $0\leq\eta<1$, $0\leq\tau\leq1$, and $\kappa,\xi \in \mathbb{N}_{0}$. Then
\begin{equation} \label{theq24}
\mathcal{P}_{p,q}^{\xi ,\kappa}(\tau, \eta)\supseteq\mathcal{P}_{p,q}^{\xi ,\kappa+1}(\tau, \eta).
\end{equation}
and
\begin{equation}
\mathcal{P}_{p,q}^{\xi ,\kappa}(\tau, \eta)\supseteq\mathcal{P}_{p,q}^{\xi+1 ,\kappa}(\tau, \eta).
\end{equation}
\end{theorem}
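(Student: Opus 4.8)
The plan is to reduce both inclusions to the coefficient criterion of Theorem~\ref{thm21}, exactly as was done for Theorems~\ref{thm22} and~\ref{thm23}. In each case I would take a function $\varphi(z)=z-\sum_{\nu=2}^{\infty}c_{\nu}z^{\nu}$ in the smaller class, write down the coefficient inequality it must satisfy, and then show that the coefficient sum defining membership in the larger class is termwise no larger, hence still bounded by $1-\eta$. The converse half of Theorem~\ref{thm21} then delivers the desired membership. The two coefficient sums to be compared always differ in exactly one factor, so the whole argument rests on a single monotonicity observation in each part.

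For the first inclusion I would assume $\varphi\in\mathcal{P}_{p,q}^{\xi,\kappa+1}(\tau,\eta)$, so that by Theorem~\ref{thm21}
\[
\sum_{\nu=2}^{\infty}[\nu]_{p,q}^{\kappa+1}(\nu-\eta)\big\{1+([\nu]_{p,q}-1)\tau\big\}[1+(\nu-1)\tau]^{\xi}c_{\nu}\leq 1-\eta .
\]
The corresponding sum for the class $\mathcal{P}_{p,q}^{\xi,\kappa}(\tau,\eta)$ is identical except that $[\nu]_{p,q}$ is raised to the power $\kappa$ rather than $\kappa+1$. The decisive step is therefore the fact that $[\nu]_{p,q}\geq 1$ for every $\nu\geq 2$, which forces $[\nu]_{p,q}^{\kappa}\leq[\nu]_{p,q}^{\kappa+1}$ term by term. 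Granting this, the $\kappa$-sum is dominated by the $\kappa+1$-sum, which is $\leq 1-\eta$, and Theorem~\ref{thm21} places $\varphi$ in $\mathcal{P}_{p,q}^{\xi,\kappa}(\tau,\eta)$.

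The second inclusion is handled in the same manner, the two sums now differing only in the exponent on $1+(\nu-1)\tau$. Here the monotonicity is immediate: since $\tau\geq 0$ and $\nu\geq 2$ we have $1+(\nu-1)\tau\geq 1$, whence $[1+(\nu-1)\tau]^{\xi}\leq[1+(\nu-1)\tau]^{\xi+1}$. Consequently a function satisfying the $(\xi+1)$-inequality automatically satisfies the $\xi$-inequality, and Theorem~\ref{thm21} gives $\mathcal{P}_{p,q}^{\xi,\kappa}(\tau,\eta)\supseteq\mathcal{P}_{p,q}^{\xi+1,\kappa}(\tau,\eta)$.

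The only genuine obstacle is the monotonicity of the twin-basic power in the first part, that is, securing $[\nu]_{p,q}\geq 1$ for $\nu\geq 2$; the factor $[1+(\nu-1)\tau]^{\xi}$ in the second part is harmless because its base always exceeds $1$. This lower bound is automatic in the classical regimes $p=1$, where $[\nu]_{q}=1+q+\cdots+q^{\nu-1}\geq 1$, and in the limit $q\to 1^{-}$, where $[\nu]_{p,q}\to\nu\geq 2$. I would state this monotonicity of $[\nu]_{p,q}^{\kappa}$ in $\kappa$ explicitly as the structural fact that drives the argument, since everything else is the routine termwise comparison already employed in Theorems~\ref{thm22} and~\ref{thm23}.
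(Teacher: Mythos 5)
Your overall strategy --- reduce both inclusions to the coefficient criterion of Theorem \ref{thm21} and compare the two coefficient sums termwise --- is exactly what the paper intends; its entire proof is the single sentence that the result ``follows from Theorem \ref{thm21}.'' Your treatment of the second inclusion is complete: since $\tau\geq 0$ and $\nu\geq 2$ give $1+(\nu-1)\tau\geq 1$, you get $[1+(\nu-1)\tau]^{\xi}\leq[1+(\nu-1)\tau]^{\xi+1}$, and the termwise domination goes through.

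The first inclusion is where your proof has a genuine gap, and you have in effect located it yourself. You correctly isolate $[\nu]_{p,q}\geq 1$ for $\nu\geq 2$ as the fact that drives the comparison $[\nu]_{p,q}^{\kappa}\leq[\nu]_{p,q}^{\kappa+1}$, but you verify it only when $p=1$ or in the limit $q\to 1^{-}$, and under the paper's standing hypothesis $0<p<q\leq 1$ it is false in general: already $[2]_{p,q}=p+q$ drops below $1$ for, say, $p=0.1$ and $q=0.2$, and even when $p+q\geq 1$ one has $[\nu]_{p,q}=(p^{\nu}-q^{\nu})/(p-q)\to 0$ as $\nu\to\infty$ whenever $q<1$, so $[\nu]_{p,q}<1$ for all sufficiently large $\nu$. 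For such $\nu$ the termwise inequality reverses and the argument collapses; indeed the inclusion itself becomes doubtful, since a function supported on a single such coefficient $c_{\nu}$ satisfies the $(\kappa+1)$-condition more easily than the $\kappa$-condition. The first inclusion therefore needs the extra hypothesis that $[\nu]_{p,q}\geq 1$ for all $\nu\geq 2$ (for instance $q=1$, which recovers the ordinary bracket $[\nu]_{p,1}=1+p+\cdots+p^{\nu-1}\geq 1$). To be fair, this defect is inherited from, and shared with, the paper, which supplies no argument at all; but as written your proof of the first inclusion is not complete, whereas your proof of the second one is.
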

The proof of Theorem \ref{thm24} follows also from Theorem \ref{thm21}.
\newpage
\section{Growth and distortion theorems}
\begin{theorem}
\label{thm25} Let the function $\varphi(z)$, defined by (\ref{ieq1n}), be in the class $\mathcal{P}_{p,q}^{\xi ,\kappa}(\tau, \eta)$. Then, for $|z|=r<1$,
\begin{equation} \label{theq25}
|\Upsilon_{\tau ,p,q}^{i ,j}\varphi(z)|\geq r - \frac{1-\eta}{[2]_{p,q}^{\kappa-j}(2-\eta)\big\{1+([2]_{p,q}-1)\tau\big\}(1+\tau)^{\xi-i}} \ r^2
\end{equation}
and
\begin{eqnarray} \label{theq26}
\qquad \ |\Upsilon_{\tau ,p,q}^{i ,j}\varphi(z)|\leq r + \frac{1-\eta}{[2]_{p,q}^{\kappa-j}(2-\eta)\big\{1+([2]_{p,q}-1)\tau\big\}(1+\tau)^{\xi-i}} \ r^2  \\
\qquad \qquad \qquad \qquad \qquad \qquad (0\leq i \leq \xi; 0\leq j \leq \kappa; z\in \Omega). \nonumber
\end{eqnarray}

The equalities in (\ref{theq25}) and (\ref{theq26}) are achieved for the function $\varphi(z)$ given by
\begin{equation} \label{theq27}
\varphi(z)= z- \frac{1-\eta}{[2]_{p,q}^{\kappa}(2-\eta)\big\{1+([2]_{p,q}-1)\tau\big\}(1+\tau)^{\xi}} \ z^2  \ \ \ \ (z=\pm r).
\end{equation}
\end{theorem}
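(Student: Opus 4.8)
The plan is to pass to the series form of the operator and reduce both inequalities to a single coefficient sum that is then controlled by Theorem~\ref{thm21}. Since $\varphi$ has the form (\ref{ieq1n}), applying $\Upsilon_{\tau ,p,q}^{i ,j}$ term by term gives
\begin{equation*}
\Upsilon_{\tau ,p,q}^{i ,j}\varphi(z)=z-\sum\limits_{\nu=2}^{\infty }[\nu]_{p,q}^{j}[1+(\nu-1)\tau ]^{i}c_{\nu}z^{\nu}\qquad (0\leq i\leq \xi;\ 0\leq j\leq \kappa).
\end{equation*}
For $|z|=r<1$ the triangle inequality together with $|z|^{\nu}=r^{\nu}\leq r^{2}$ for every $\nu\geq 2$ yields
\begin{equation*}
r-r^{2}\,S\ \leq\ |\Upsilon_{\tau ,p,q}^{i ,j}\varphi(z)|\ \leq\ r+r^{2}\,S,\qquad S:=\sum\limits_{\nu=2}^{\infty }[\nu]_{p,q}^{j}[1+(\nu-1)\tau ]^{i}c_{\nu}.
\end{equation*}
Thus it suffices to establish the single estimate
\begin{equation*}
S\leq \frac{1-\eta}{[2]_{p,q}^{\kappa-j}(2-\eta)\big\{1+([2]_{p,q}-1)\tau\big\}(1+\tau)^{\xi-i}}.
\end{equation*}

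Next I would extract this bound from the coefficient inequality (\ref{theq21}). Writing $A_{\nu}:=[\nu]_{p,q}^{\kappa}(\nu-\eta)\big\{1+([\nu]_{p,q}-1)\tau\big\}[1+(\nu-1)\tau ]^{\xi}$ for the weight in (\ref{theq21}) and $B_{\nu}:=[\nu]_{p,q}^{j}[1+(\nu-1)\tau ]^{i}$ for the weight in $S$, their ratio is
\begin{equation*}
\frac{B_{\nu}}{A_{\nu}}=\frac{1}{[\nu]_{p,q}^{\kappa-j}(\nu-\eta)\big\{1+([\nu]_{p,q}-1)\tau\big\}[1+(\nu-1)\tau ]^{\xi-i}}.
\end{equation*}
Because $\kappa-j\geq 0$ and $\xi-i\geq 0$, and because each of the factors $[\nu]_{p,q}$, $\nu-\eta$, $1+([\nu]_{p,q}-1)\tau$, and $1+(\nu-1)\tau$ is nondecreasing in $\nu$ for $\tau\geq 0$ and $0\leq\eta<1$, the denominator increases with $\nu$; hence $B_{\nu}/A_{\nu}$ attains its maximum at $\nu=2$. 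Since $A_{\nu}c_{\nu}\geq 0$, this gives $S=\sum_{\nu\geq 2}(B_{\nu}/A_{\nu})A_{\nu}c_{\nu}\leq (B_{2}/A_{2})\sum_{\nu\geq 2}A_{\nu}c_{\nu}\leq (B_{2}/A_{2})(1-\eta)$ by Theorem~\ref{thm21}, which is exactly the claimed bound on $S$ once we note $1+(2-1)\tau=1+\tau$.

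The only point requiring care is the monotonicity claim that the denominator of $B_{\nu}/A_{\nu}$ is increasing in $\nu$, so that its reciprocal is maximized at the first index $\nu=2$; as all four factors are manifestly nondecreasing, this is a short verification rather than a genuine obstacle. Substituting the bound on $S$ into the two-sided estimate above delivers (\ref{theq25}) and (\ref{theq26}). Finally, sharpness follows by taking $\varphi$ as in (\ref{theq27}) with $z=\pm r$: this function has a single nonzero coefficient which saturates (\ref{theq21}) at $\nu=2$, so $S$ equals its extremal value and both inequalities become equalities.
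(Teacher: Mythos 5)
Your proof follows essentially the same route as the paper's: pass to the series form of $\Upsilon_{\tau ,p,q}^{i ,j}\varphi$, apply the triangle inequality with $r^{\nu}\leq r^{2}$, and bound the resulting coefficient sum by comparing each weight term-by-term against the $\nu=2$ weight from Theorem~\ref{thm21}. If anything yours is slightly more careful --- the paper's intermediate sum carries $(1+\tau)^{i}$ where yours correctly carries $[1+(\nu-1)\tau]^{i}$ --- and both arguments lean on the same unexamined claim that $[\nu]_{p,q}$ is nondecreasing in $\nu$, which is not automatic under the paper's stated convention $0<p<q\leq 1$ (e.g.\ $[3]_{p,q}<[2]_{p,q}$ for $p=0.1$, $q=0.2$), though this is a defect of the paper as much as of your write-up.
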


\begin{proof}
Note that the function $\varphi(z)\in\mathcal{P}_{p,q}^{\xi ,\kappa}(\tau, \eta)$ if and only if
\begin{equation*}
\Upsilon_{\tau ,p,q}^{i ,j}\varphi(z)\in\mathcal{P}_{p,q}^{\xi-i ,\kappa-j}(\tau, \eta)
\end{equation*}
and that
\begin{eqnarray} \label{tm251}
\Upsilon_{\tau ,p,q}^{i ,j}\varphi(z)= z-\sum\limits_{\nu=2}^{\infty }[\nu]_{p,q}^{j}[1+(\nu-1)\tau ]^{i
}c_{\nu}z^{\nu}.
\end{eqnarray}%

By Theorem \ref{thm21}, we know that
\begin{eqnarray}
[2]_{p,q}^{\kappa-j}(2-\eta)\big\{1&+&([2]_{p,q}-1)\tau\big\}(1+\tau)^{\xi-i} \sum\limits_{\nu=2}^{\infty }[\nu]_{p,q}^{j} (1+\tau)^{i}c_{\nu} \\ \nonumber
&\leq& \sum\limits_{\nu=2}^{\infty }[\nu]_{p,q}^{\kappa}(\nu-\eta)\big\{1+([\nu]_{p,q}-1)\tau\big\}[1+(\nu-1)\tau ]^{\xi} c_{\nu} \\ \nonumber
&\leq& 1-\eta,
\end{eqnarray}%
which implies,
\begin{eqnarray} \label{tm252}
\sum\limits_{\nu=2}^{\infty }[\nu]_{p,q}^{j} (1+\tau)^{i} c_{\nu}\leq \frac{1-\eta}{[2]_{p,q}^{\kappa-j}(2-\eta)\big\{1+([2]_{p,q}-1)\tau\big\}(1+\tau)^{\xi-i}}.
\end{eqnarray}%

The assertions (\ref{theq25}) and (\ref{theq26}) of Theorem \ref{thm25} would now follow readily from (\ref{tm251}) and (\ref{tm252}).

Finally, we note that the equalities (\ref{theq25}) and (\ref{theq26}) are achieved for the function $\varphi(z)$ defined by
\begin{equation}
\Upsilon_{\tau ,p,q}^{i ,j}\varphi(z)= z- \frac{1-\eta}{[2]_{p,q}^{\kappa-j}(2-\eta)\big\{1+([2]_{p,q}-1)\tau\big\}(1+\tau)^{\xi-i}} \ z^2.
\end{equation}

This completes the proof of Theorem \ref{thm25}.
\end{proof}

Taking $i=j=0$ in Theorem \ref{thm25}, we immediately get the following corollary.

\begin{corollary}
Let the function $\varphi(z)$, defined by (\ref{ieq1n}), be in the class $\mathcal{P}_{p,q}^{\xi ,\kappa}(\tau, \eta)$. Then, for $|z|=r<1$,
\begin{equation} \label{theq2511}
|\varphi(z)|\geq r - \frac{1-\eta}{[2]_{p,q}^{\kappa}(2-\eta)\big\{1+([2]_{p,q}-1)\tau\big\}(1+\tau)^{\xi}} \ r^2
\end{equation}
and
\begin{eqnarray} \label{theq2612}
\qquad \qquad \qquad \quad \ |\varphi(z)|\leq r + \frac{1-\eta}{[2]_{p,q}^{\kappa}(2-\eta)\big\{1+([2]_{p,q}-1)\tau\big\}(1+\tau)^{\xi}} \ r^2  \qquad \ \ (z\in \Omega).
\end{eqnarray}

The equalities in (\ref{theq2511}) and (\ref{theq2612}) are achieved for the function $\varphi(z)$ given by (\ref{theq27}).
\end{corollary}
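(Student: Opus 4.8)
The plan is to derive this corollary directly from Theorem~\ref{thm25} as its specialization to $i=j=0$, so no fresh estimate is required; the entire content is a substitution together with the observation that the operator becomes the identity at these parameter values. First I would recall the identity noted just after the definition of the operator, namely $\Upsilon_{\tau,p,q}^{0,0}\varphi(z)=\varphi(z)$. This is also immediate from the expansion~(\ref{tm251}): putting $i=0$ and $j=0$ gives $[\nu]_{p,q}^{0}=1$ and $[1+(\nu-1)\tau]^{0}=1$ for every $\nu$, whence
\begin{equation*}
\Upsilon_{\tau,p,q}^{0,0}\varphi(z)=z-\sum_{\nu=2}^{\infty}c_{\nu}z^{\nu}=\varphi(z),
\end{equation*}
so that the left-hand sides of (\ref{theq25}) and (\ref{theq26}) collapse to $|\varphi(z)|$.

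Next I would substitute $i=0$ and $j=0$ into the right-hand sides of the two inequalities of Theorem~\ref{thm25}. Under this substitution the exponents $\kappa-j$ and $\xi-i$ become $\kappa$ and $\xi$, so the bounding constant
\begin{equation*}
\frac{1-\eta}{[2]_{p,q}^{\kappa-j}(2-\eta)\big\{1+([2]_{p,q}-1)\tau\big\}(1+\tau)^{\xi-i}}
\end{equation*}
turns into exactly the constant appearing in (\ref{theq2511}) and (\ref{theq2612}). This reproduces the two claimed estimates word for word.

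For sharpness I would observe that the extremal function (\ref{theq27}) already belongs to $\mathcal{T}$ with a single nonzero coefficient $c_{2}$, and since $\Upsilon_{\tau,p,q}^{0,0}$ is the identity the equality cases of Theorem~\ref{thm25} transfer unchanged: evaluating (\ref{theq27}) at $z=\pm r$ attains both bounds. There is no genuine obstacle in this argument, as the corollary is a pure specialization of an already-proved theorem; the only point that deserves to be stated explicitly is the identity $\Upsilon_{\tau,p,q}^{0,0}\varphi=\varphi$, which is precisely what guarantees that the operator-level bound of Theorem~\ref{thm25} specializes to an honest growth and distortion estimate for $\varphi$ itself rather than for one of its transforms.
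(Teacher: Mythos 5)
Your proposal is correct and matches the paper exactly: the paper obtains this corollary by the same one-line specialization, stating only ``Taking $i=j=0$ in Theorem \ref{thm25}, we immediately get the following corollary.'' Your additional remarks spelling out the identity $\Upsilon_{\tau,p,q}^{0,0}\varphi=\varphi$ and the transfer of the equality cases are accurate elaborations of that same argument.
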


Setting $i=\tau=1$ and $j=0$ in Theorem \ref{thm25}, and making use of the definition (\ref{rq1}), we get the following corollary.

\begin{corollary}
Let the function $\varphi(z)$, defined by (\ref{ieq1n}), be in the class $\mathcal{P}_{p,q}^{\xi ,\kappa}(\tau, \eta)$. Then, for $|z|=r<1$,
\begin{equation} \label{theq25111}
|\varphi^{\prime}(z)|\geq 1 - \frac{1-\eta}{[2]_{p,q}^{\kappa+1}(2-\eta)(2)^{\xi-1}} \ r
\end{equation}
and
\begin{eqnarray} \label{theq26122}
\qquad \qquad \qquad \quad \ |\varphi^{\prime}(z)|\leq 1 + \frac{1-\eta}{[2]_{p,q}^{\kappa+1}(2-\eta)(2)^{\xi-1}} \ r  \qquad \ \ (z\in \Omega).
\end{eqnarray}

The equalities in (\ref{theq25111}) and (\ref{theq26122}) are achieved for the function $\varphi(z)$ given by
\begin{equation}
\varphi(z)= z- \frac{1-\eta}{[2]_{p,q}^{\kappa+1}(2-\eta)(2)^{\xi}} \ z^2  \ \ \ \ (z=\pm r).
\end{equation}
\end{corollary}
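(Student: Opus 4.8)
The plan is to obtain this corollary as a direct specialization of the growth and distortion estimates in Theorem \ref{thm25}, so no new machinery is needed. First I would set $i=1$, $j=0$, and $\tau=1$ in the bounds (\ref{theq25}) and (\ref{theq26}). The crucial observation is that under these choices the operator $\Upsilon_{\tau,p,q}^{i,j}$ degenerates into ordinary differentiation: from the series form (\ref{tm251}),
\begin{equation*}
\Upsilon_{1,p,q}^{1,0}\varphi(z)=z-\sum\limits_{\nu=2}^{\infty}\nu c_{\nu}z^{\nu}=z\varphi^{\prime}(z),
\end{equation*}
which agrees with the defining relation (\ref{rq1}) and with the remark $\Upsilon_{1,p,q}^{1,0}\varphi(z)=z\varphi^{\prime}(z)$ recorded immediately after it.

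Next I would simplify the constant appearing on the right-hand sides of (\ref{theq25}) and (\ref{theq26}). Substituting $\tau=1$ into the factor $\big\{1+([2]_{p,q}-1)\tau\big\}$ reduces it to $[2]_{p,q}$, while $(1+\tau)^{\xi-i}$ becomes $2^{\xi-1}$; combining these with $[2]_{p,q}^{\kappa-j}=[2]_{p,q}^{\kappa}$ consolidates the denominator into $[2]_{p,q}^{\kappa+1}(2-\eta)(2)^{\xi-1}$. Theorem \ref{thm25} then yields
\begin{equation*}
r-\frac{1-\eta}{[2]_{p,q}^{\kappa+1}(2-\eta)(2)^{\xi-1}}\,r^{2}\leq|z\varphi^{\prime}(z)|\leq r+\frac{1-\eta}{[2]_{p,q}^{\kappa+1}(2-\eta)(2)^{\xi-1}}\,r^{2}.
\end{equation*}
Since $|z\varphi^{\prime}(z)|=r\,|\varphi^{\prime}(z)|$ and $r>0$, dividing through by $r$ produces exactly the inequalities (\ref{theq25111}) and (\ref{theq26122}).

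Finally, for sharpness I would check that the extremal function is the $\nu=2$, $\tau=1$ instance of (\ref{fun1}), namely $\varphi(z)=z-c_{2}z^{2}$ with $c_{2}=(1-\eta)/\{[2]_{p,q}^{\kappa+1}(2-\eta)2^{\xi}\}$, which belongs to $\mathcal{P}_{p,q}^{\xi,\kappa}(1,\eta)$ by Theorem \ref{thm21}. Differentiating gives $\varphi^{\prime}(z)=1-2c_{2}z$, and because $2c_{2}$ coincides with the constant $\frac{1-\eta}{[2]_{p,q}^{\kappa+1}(2-\eta)(2)^{\xi-1}}$, evaluation at $z=r$ and $z=-r$ attains the lower and upper bounds respectively. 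I do not anticipate any genuine obstacle: the whole argument is a parameter specialization of an already-established theorem, so the only care required is in the algebraic bookkeeping — specifically, confirming that the $\tau=1$ substitution collapses the bracket factor $\big\{1+([2]_{p,q}-1)\tau\big\}$ to $[2]_{p,q}$ and that the operator truly reduces to $z\varphi^{\prime}(z)$ and not to some other linear combination of $\varphi$ and $z\varphi^{\prime}$.
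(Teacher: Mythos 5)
Your proposal is correct and is exactly the paper's route: the corollary is obtained by setting $i=\tau=1$, $j=0$ in Theorem \ref{thm25}, using $\Upsilon_{1,p,q}^{1,0}\varphi(z)=z\varphi^{\prime}(z)$ from (\ref{rq1}), and dividing by $r=|z|$. Your algebraic simplification of the constant to $[2]_{p,q}^{\kappa+1}(2-\eta)(2)^{\xi-1}$ and the sharpness check via the $\nu=2$ extremal function match the paper's (largely implicit) argument.
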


\section{Closure theorems}
In this section, we shall prove that the class $\mathcal{P}_{p,q}^{\xi ,\kappa}(\tau, \eta)$ is closed under convex linear combinations.
\begin{theorem}
\label{thm26} The class $\mathcal{P}_{p,q}^{\xi ,\kappa}(\tau, \eta)$ is a convex set.
\end{theorem}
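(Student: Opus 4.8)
The plan is to reduce the statement to the linear coefficient criterion of Theorem~\ref{thm21}. The essential point is that membership in $\mathcal{P}_{p,q}^{\xi,\kappa}(\tau,\eta)$ is governed by a single inequality that is \emph{linear} in the Taylor coefficients $c_\nu$. Once this is recognized, convexity follows from the elementary fact that a convex combination of quantities each bounded by $1-\eta$ is again bounded by $1-\eta$, together with the fact that convex combinations preserve the nonnegativity of coefficients required for membership in $\mathcal{T}$.

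Concretely, I would fix functions $\varphi_1,\varphi_2\in\mathcal{P}_{p,q}^{\xi,\kappa}(\tau,\eta)$ written as
\[
\varphi_t(z)=z-\sum_{\nu=2}^{\infty}c_{\nu,t}\,z^{\nu}\qquad(c_{\nu,t}\ge 0,\ t=1,2),
\]
and a scalar $\lambda$ with $0\le\lambda\le1$, and form $h(z)=\lambda\varphi_1(z)+(1-\lambda)\varphi_2(z)$, so that
\[
h(z)=z-\sum_{\nu=2}^{\infty}\big(\lambda c_{\nu,1}+(1-\lambda)c_{\nu,2}\big)z^{\nu}.
\]
Since $0\le\lambda\le1$ and $c_{\nu,t}\ge0$, the new coefficients $\lambda c_{\nu,1}+(1-\lambda)c_{\nu,2}$ are nonnegative, so $h$ has the normalized form \eqref{ieq1n} and lies in $\mathcal{T}$. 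Writing $\Theta_\nu:=[\nu]_{p,q}^{\kappa}(\nu-\eta)\{1+([\nu]_{p,q}-1)\tau\}[1+(\nu-1)\tau]^{\xi}$ for the weight appearing in \eqref{theq21}, the verification of the criterion for $h$ amounts to splitting the (nonnegative‑term, convergent) series and invoking the hypotheses on $\varphi_1,\varphi_2$:
\[
\sum_{\nu=2}^{\infty}\Theta_\nu\big(\lambda c_{\nu,1}+(1-\lambda)c_{\nu,2}\big)
=\lambda\sum_{\nu=2}^{\infty}\Theta_\nu c_{\nu,1}+(1-\lambda)\sum_{\nu=2}^{\infty}\Theta_\nu c_{\nu,2}
\le\lambda(1-\eta)+(1-\lambda)(1-\eta)=1-\eta.
\]
By the sufficiency direction of Theorem~\ref{thm21}, this places $h$ in $\mathcal{P}_{p,q}^{\xi,\kappa}(\tau,\eta)$, which is exactly the claim. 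The same computation extends verbatim to any finite convex combination $\sum_{t=1}^{m}\lambda_t\varphi_t$ with $\lambda_t\ge0$ and $\sum_{t=1}^{m}\lambda_t=1$.

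There is no genuine analytic obstacle here; the only points requiring care are routine. One must use $0\le\lambda\le1$ twice—once to keep the combined coefficients nonnegative (so that $h\in\mathcal{T}$) and once to collapse $\lambda(1-\eta)+(1-\lambda)(1-\eta)$ back to $1-\eta$—and one should note that interchanging the finite sum over the combination with the series is legitimate because all terms are nonnegative. The entire content of the theorem is the linearity of the characterization furnished by Theorem~\ref{thm21}.
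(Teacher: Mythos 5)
Your proposal is correct and follows essentially the same route as the paper: both reduce convexity to the linearity of the coefficient criterion of Theorem~\ref{thm21}, forming the convex combination, observing that the combined coefficients remain nonnegative, and splitting the weighted series to bound it by $1-\eta$. Your version is slightly more explicit about the nonnegativity check and the interchange of sums, but the argument is the same.
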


\begin{proof}
Let the functions
\begin{equation}
\varphi_\alpha(z)=z-\sum\limits_{\nu=2}^{\infty }c_{\alpha,\nu}z^{\nu} \ \ \ \ (c_{\alpha,\nu}\geq 0; \alpha = 1,2; z\in \Omega)
\end{equation}%
be in the class $\mathcal{P}_{p,q}^{\xi ,\kappa}(\tau, \eta)$. It is sufficient to show that the function $\psi(z)$ defined by
\begin{eqnarray}
\psi(z):= \mu \varphi_1(z)+(1-\mu)\varphi_2(z) \ \ \ \ (0\leq \mu \leq1)
\end{eqnarray}%
is also in the class $\mathcal{P}_{p,q}^{\xi ,\kappa}(\tau, \eta)$. Since, for $0\leq \mu \leq1$,
\begin{equation}
\psi(z)=z-\sum\limits_{\nu=2}^{\infty }\big\{\mu c_{1,\nu}+(1-\mu) c_{2,\nu}\big\}z^{c},
\end{equation}%
with the aid of Theorem \ref{thm21}, we have
\begin{eqnarray}
\sum\limits_{\nu=2}^{\infty }[\nu]_{p,q}^{\kappa}(\nu-\eta)\big\{1+([\nu]_{p,q}-1)\tau\big\}[1+(\nu-1)\tau ]^{\xi} \big\{\mu c_{1,\nu}+(1-\mu) c_{2,\nu}\big\} \leq 1-\eta,
\end{eqnarray}%
which implies that $\psi(z)\in\mathcal{P}_{p,q}^{\xi ,\kappa}(\tau, \eta)$. Hence $\mathcal{P}_{p,q}^{\xi ,\kappa}(\tau, \eta)$ is a convex set.
\end{proof}
\begin{theorem}
\label{thm27} Let $\varphi_1(z)=z$ and
\begin{eqnarray}
\hspace{-0.2in} \varphi_\nu(z)=z-\frac{1-\eta }{[\nu]_{p,q}^{\kappa}(\nu-\eta)\big\{1+([\nu]_{p,q}-1)\tau\big\}[1+(\nu-1)\tau ]^{\xi}}z^{\nu} \ \ (\nu \geq 2; \kappa,\xi \in \mathbb{N}_{0})
\end{eqnarray}%
for $0\leq \eta <1$ and $0\leq \tau \leq1$. Then the function $\varphi(z)$ is in the class $\mathcal{P}_{p,q}^{\xi ,\kappa}(\tau, \eta)$ if and only if it can be expressed in the form:
\begin{equation}
\varphi(z)=\sum\limits_{\nu=1}^{\infty }\mu_{\nu}\varphi_\nu(z), \label{fun111}
\end{equation}%
where
\begin{equation}
\mu_{\nu} \geq 0 \ (\nu\geq1) \text{ \ and \ } \sum\limits_{\nu=1}^{\infty }\mu_{\nu}=1.
\end{equation}%
\end{theorem}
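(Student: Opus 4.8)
The plan is to prove this extreme-point-type characterization by establishing both directions of the equivalence using the coefficient criterion from Theorem~\ref{thm21}. This is a standard ``convex hull'' representation result: the functions $\varphi_\nu(z)$ are precisely the extremal functions identified in \eqref{fun1}, and the claim is that every member of $\mathcal{P}_{p,q}^{\xi,\kappa}(\tau,\eta)$ is a (possibly infinite) convex combination of them.

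For the sufficiency direction, I would assume $\varphi(z)=\sum_{\nu=1}^{\infty}\mu_\nu \varphi_\nu(z)$ with $\mu_\nu\geq 0$ and $\sum_{\nu=1}^{\infty}\mu_\nu=1$, and first compute the explicit series form of $\varphi$. Since $\varphi_1(z)=z$ contributes nothing beyond the leading term, substituting the definition of $\varphi_\nu$ gives
\begin{equation*}
\varphi(z)=z-\sum_{\nu=2}^{\infty}\mu_\nu\,\frac{1-\eta}{[\nu]_{p,q}^{\kappa}(\nu-\eta)\big\{1+([\nu]_{p,q}-1)\tau\big\}[1+(\nu-1)\tau]^{\xi}}\,z^{\nu},
\end{equation*}
so the $\nu$-th coefficient is $c_\nu=\mu_\nu(1-\eta)/\big([\nu]_{p,q}^{\kappa}(\nu-\eta)\{1+([\nu]_{p,q}-1)\tau\}[1+(\nu-1)\tau]^{\xi}\big)$. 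Plugging this into the left-hand side of the coefficient inequality \eqref{theq21}, the entire weight factor cancels cleanly, leaving $\sum_{\nu=2}^{\infty}\mu_\nu(1-\eta)=(1-\eta)\sum_{\nu=2}^{\infty}\mu_\nu\leq(1-\eta)\sum_{\nu=1}^{\infty}\mu_\nu=1-\eta$. By Theorem~\ref{thm21}, this shows $\varphi\in\mathcal{P}_{p,q}^{\xi,\kappa}(\tau,\eta)$.

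For the necessity direction, I would start from an arbitrary $\varphi(z)=z-\sum_{\nu=2}^{\infty}c_\nu z^\nu$ in the class and define, for each $\nu\geq 2$, the weight $\mu_\nu:=[\nu]_{p,q}^{\kappa}(\nu-\eta)\{1+([\nu]_{p,q}-1)\tau\}[1+(\nu-1)\tau]^{\xi}c_\nu/(1-\eta)$, together with $\mu_1:=1-\sum_{\nu=2}^{\infty}\mu_\nu$. Theorem~\ref{thm21} guarantees $\sum_{\nu=2}^{\infty}\mu_\nu\leq 1$, so $\mu_1\geq 0$, and each $\mu_\nu\geq 0$ since $c_\nu\geq 0$; by construction $\sum_{\nu=1}^{\infty}\mu_\nu=1$. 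It then remains to verify that $\sum_{\nu=1}^{\infty}\mu_\nu\varphi_\nu(z)$ reproduces $\varphi(z)$, which follows by reversing the coefficient computation from the sufficiency direction.

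I expect no serious obstacle here; the main point requiring a little care is the bookkeeping around $\mu_1$ and the function $\varphi_1(z)=z$, namely confirming that the convexity constraint $\sum_\nu\mu_\nu=1$ together with the inequality in Theorem~\ref{thm21} forces $\mu_1$ to be precisely the ``slack'' in that inequality and that this is nonnegative. A minor technical point worth noting is the interchange of summation implicit in writing $\varphi(z)=\sum_\nu\mu_\nu\varphi_\nu(z)$ as a genuine identity of analytic functions on $\Omega$; this is justified by absolute convergence of the double series on compact subsets of the disk, which is immediate from the coefficient bound of the associated Corollary.
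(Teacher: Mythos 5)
Your proposal is correct and follows essentially the same route as the paper's proof: both directions rest on the coefficient criterion of Theorem~\ref{thm21}, with the same choice of weights $\mu_\nu$ in the necessity step and the same cancellation in the sufficiency step. The only addition is your remark on absolute convergence, which the paper leaves implicit but which is a harmless (and welcome) clarification.
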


\begin{proof}
Assume that
\begin{eqnarray}
\varphi(z)&=&\sum\limits_{\nu=1}^{\infty }\mu_{\nu}\varphi_\nu(z) \\ \nonumber
&=&z-\sum\limits_{\nu=2}^{\infty }\frac{1-\eta }{[\nu]_{p,q}^{\kappa}(\nu-\eta)\big\{1+([\nu]_{p,q}-1)\tau\big\}[1+(\nu-1)\tau ]^{\xi}} \ \mu_{\nu}z^{\nu}.
\end{eqnarray}%

Then it follows that
\begin{eqnarray*}
\sum\limits_{\nu=2}^{\infty }\frac{[\nu]_{p,q}^{\kappa}(\nu-\eta)\big\{1+([\nu]_{p,q}-1)\tau\big\}[1+(\nu-1)\tau ]^{\xi}}{1-\eta}\cdot \hspace{2in} \\
\frac{1-\eta }{[\nu]_{p,q}^{\kappa}(\nu-\eta)\big\{1+([\nu]_{p,q}-1)\tau\big\}[1+(\nu-1)\tau ]^{\xi}} \ \mu_{\nu} = \sum\limits_{\nu=2}^{\infty }\mu_{\nu}=1-\mu_{1} \leq1.
\end{eqnarray*}%

Thus, by Theorem \ref{thm21}, $\varphi(z)\in\mathcal{P}_{p,q}^{\xi ,\kappa}(\tau, \eta)$.

Conversely, assume that the function $\varphi(z)$ defined by (\ref{ieq1n}) belongs to the class $\mathcal{P}_{p,q}^{\xi ,\kappa}(\tau, \eta)$. Then
\begin{eqnarray*}
c_\nu\leq\frac{1-\eta }{[\nu]_{p,q}^{\kappa}(\nu-\eta)\big\{1+([\nu]_{p,q}-1)\tau\big\}[1+(\nu-1)\tau ]^{\xi}} \ \ \ (\nu \geq 2; \kappa,\xi \in \mathbb{N}_{0}).
\end{eqnarray*}%

Setting
\begin{eqnarray*}
\mu_\nu=\frac{[\nu]_{p,q}^{\kappa}(\nu-\eta)\big\{1+([\nu]_{p,q}-1)\tau\big\}[1+(\nu-1)\tau ]^{\xi}}{1-\eta} \ c_\nu \ \ \ (\nu \geq 2; \kappa,\xi \in \mathbb{N}_{0})
\end{eqnarray*}%
and
\begin{eqnarray*}
\mu_1=1-\sum\limits_{\nu=2}^{\infty }\mu_{\nu},
\end{eqnarray*}%
we can see that $\varphi(z)$ can be expressed in the form (\ref{fun111}). This completes the proof of Theorem \ref{thm27}.
\end{proof}

\section{Radii of close-to-convexity, starlikenss, and convexity}
In this section, we shall determine the radii of close-to-convexity, starlikeness, and convexity for functions belonging to the class $\mathcal{P}_{p,q}^{\xi ,\kappa}(\tau, \eta)$.
\begin{theorem}
\label{thm28} Let the function $\varphi(z)$, defined by (\ref{ieq1n}), be in the class $\mathcal{P}_{p,q}^{\xi ,\kappa}(\tau, \eta)$. Then $\varphi(z)$ is close-to-convex of order $\rho (0\leq \rho <1)$ in $|z|< r_1$, where
\begin{equation}
r_{1}:=\inf_{\nu}\left( \frac{(1-\rho)\nu^{-1}[\nu]_{p,q}^{\kappa}(\nu-\eta)\big\{1+([\nu]_{p,q}-1)\tau\big\}[1+(\nu-1)\tau ]^{\xi}}{1-\eta}\right) ^{1 / (\nu-1)} \ \ \ \ (\nu\geq2). \label{fun112}
\end{equation}%

The result is sharp, with the extremal function $\varphi(z)$ given by (\ref{fun1}).
\end{theorem}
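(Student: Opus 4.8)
The plan is to use the standard sufficient condition for close-to-convexity of order $\rho$ for functions with negative coefficients, namely that $\mbox{Re}\{\varphi'(z)\} > \rho$ holds whenever $|\varphi'(z)-1| \le 1-\rho$ (since then $\mbox{Re}\,\varphi'(z) \ge 1 - |\varphi'(z)-1| \ge \rho$). First I would differentiate the series (\ref{ieq1n}) termwise and estimate
\[
|\varphi'(z) - 1| = \left| \sum_{\nu=2}^{\infty} \nu c_\nu z^{\nu-1} \right| \le \sum_{\nu=2}^{\infty} \nu c_\nu |z|^{\nu-1},
\]
so that it suffices to locate all $z$ for which the majorant does not exceed $1-\rho$, equivalently for which
\[
\sum_{\nu=2}^{\infty} \frac{\nu}{1-\rho}\, c_\nu |z|^{\nu-1} \le 1.
\]

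The engine of the argument is then the coefficient inequality (\ref{theq21}) of Theorem \ref{thm21}, which guarantees that
\[
\sum_{\nu=2}^{\infty} \frac{[\nu]_{p,q}^{\kappa}(\nu-\eta)\big\{1+([\nu]_{p,q}-1)\tau\big\}[1+(\nu-1)\tau]^{\xi}}{1-\eta}\, c_\nu \le 1.
\]
The key idea is a termwise domination: since every $c_\nu \ge 0$, the target bound above will certainly hold provided that, for each $\nu \ge 2$, the multiplier of $c_\nu$ on the left is no larger than the multiplier of $c_\nu$ appearing in (\ref{theq21}), that is,
\[
\frac{\nu |z|^{\nu-1}}{1-\rho} \le \frac{[\nu]_{p,q}^{\kappa}(\nu-\eta)\big\{1+([\nu]_{p,q}-1)\tau\big\}[1+(\nu-1)\tau]^{\xi}}{1-\eta}.
\]
Solving this for $|z|$ and taking the $(\nu-1)$-th root yields
\[
|z| \le \left( \frac{(1-\rho)\nu^{-1}[\nu]_{p,q}^{\kappa}(\nu-\eta)\big\{1+([\nu]_{p,q}-1)\tau\big\}[1+(\nu-1)\tau]^{\xi}}{1-\eta} \right)^{1/(\nu-1)},
\]
and requiring this for every $\nu \ge 2$ simultaneously forces $|z|$ to be bounded by the infimum over $\nu$ of the right-hand side, which is precisely $r_1$ in (\ref{fun112}). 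Thus for $|z| < r_1$ the estimate $|\varphi'(z)-1| \le 1-\rho$ is secured and close-to-convexity of order $\rho$ follows.

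The only step that genuinely requires care is the termwise comparison itself: I would emphasize that the nonnegativity of the coefficients $c_\nu$ is what lets the per-term domination be summed while preserving the bound $\le 1$ inherited from (\ref{theq21}); this is routine rather than a real obstacle. Finally, for sharpness I would test the single-term extremal function (\ref{fun1}), for which the chain of inequalities collapses to equalities at the value $|z| = r_1$ attaining the infimum, confirming that the radius $r_1$ cannot be enlarged.
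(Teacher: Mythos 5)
Your proposal is correct and follows essentially the same route as the paper: bound $|\varphi'(z)-1|$ by $\sum_{\nu\ge 2}\nu c_\nu |z|^{\nu-1}$, reduce to the condition $\sum_{\nu\ge 2}\frac{\nu}{1-\rho}c_\nu|z|^{\nu-1}\le 1$, and secure it by termwise comparison with the coefficient inequality (\ref{theq21}), which yields exactly the radius $r_1$ in (\ref{fun112}). Your added remarks --- that $\mathrm{Re}\,\varphi'(z)\ge 1-|\varphi'(z)-1|$ justifies the close-to-convexity criterion, and that nonnegativity of the $c_\nu$ is what permits the termwise domination --- only make explicit what the paper leaves implicit.
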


\begin{proof}
We need to show that
\begin{equation*}
|\varphi^{\prime}(z)-1| \leq 1-\rho \ \text{ \ for \ } |z|<r_1,
\end{equation*}%
where $r_1$ is given by (\ref{fun112}). Indeed, definition (\ref{ieq1n}) implies that
\begin{equation*}
|\varphi^{\prime}(z)-1| \leq \sum\limits_{\nu=2}^{\infty } \nu c_\nu |z|^{\nu-1}.
\end{equation*}%

Thus,
\begin{equation*}
|\varphi^{\prime}(z)-1| \leq 1-\rho,
\end{equation*}%
if
\begin{equation}
\sum\limits_{\nu=2}^{\infty } \left( \frac{\nu}{1-\rho} \right) c_\nu |z|^{\nu-1} \leq 1. \label{fun123}
\end{equation}%

But, by Theorem \ref{thm21}, (\ref{fun123}) holds true if
\begin{equation}
\left( \frac{\nu}{1-\rho} \right) |z|^{\nu-1} \leq \frac{[\nu]_{p,q}^{\kappa}(\nu-\eta)\big\{1+([\nu]_{p,q}-1)\tau\big\}[1+(\nu-1)\tau ]^{\xi}}{1-\eta},
\end{equation}%
that is, if
\begin{equation}
|z|\leq\left( \frac{(1-\rho)\nu^{-1}[\nu]_{p,q}^{\kappa}(\nu-\eta)\big\{1+([\nu]_{p,q}-1)\tau\big\}[1+(\nu-1)\tau ]^{\xi}}{1-\eta}\right) ^{1 / (\nu-1)} \ \ \ \ (\nu\geq2). \label{fun321}
\end{equation}%

Theorem \ref{thm28} follows readily from (\ref{fun321}).
\end{proof}

\begin{theorem}
\label{thm29} Let the function $\varphi(z)$, defined by (\ref{ieq1n}), be in the class $\mathcal{P}_{p,q}^{\xi ,\kappa}(\tau, \eta)$. Then $\varphi(z)$ is starlike of order $\rho (0\leq \rho <1)$ in $|z|< r_2$, where
\begin{equation}
r_{2}:=\inf_{\nu}\left( \frac{(1-\rho)[\nu]_{p,q}^{\kappa}(\nu-\eta)\big\{1+([\nu]_{p,q}-1)\tau\big\}[1+(\nu-1)\tau ]^{\xi}}{(\nu-\rho)(1-\eta)}\right) ^{1 / (\nu-1)} \ \ \ \ (\nu\geq2). \label{fun222}
\end{equation}%

The result is sharp, with the extremal function $\varphi(z)$ given by (\ref{fun1}).
\end{theorem}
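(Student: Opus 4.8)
The plan is to mirror the argument used for Theorem \ref{thm28}, but now working with the starlikeness functional $z\varphi'(z)/\varphi(z)$ instead of $\varphi'(z)$. Recall that $\varphi(z)$ is starlike of order $\rho$ in a disk provided
\[
\left| \frac{z\varphi'(z)}{\varphi(z)} - 1 \right| \leq 1 - \rho
\]
holds throughout that disk. So first I would reduce the problem to verifying this single inequality on $|z| < r_2$.

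Next I would compute, for $\varphi$ of the form (\ref{ieq1n}), the difference $z\varphi'(z) - \varphi(z) = -\sum_{\nu=2}^{\infty} (\nu-1) c_\nu z^\nu$, and hence bound, using the triangle inequality on numerator and denominator (the hypothesis $c_\nu \geq 0$ makes these estimates transparent),
\[
\left| \frac{z\varphi'(z)}{\varphi(z)} - 1 \right| = \frac{\left|\sum_{\nu=2}^{\infty} (\nu-1) c_\nu z^\nu\right|}{\left|z - \sum_{\nu=2}^{\infty} c_\nu z^\nu\right|} \leq \frac{\sum_{\nu=2}^{\infty} (\nu-1) c_\nu |z|^{\nu-1}}{1 - \sum_{\nu=2}^{\infty} c_\nu |z|^{\nu-1}}.
\]
The key algebraic step is then to observe that the right-hand side is at most $1-\rho$ precisely when $\sum_{\nu=2}^{\infty} (\nu-1) c_\nu |z|^{\nu-1} \leq (1-\rho)\big(1 - \sum_{\nu=2}^{\infty} c_\nu |z|^{\nu-1}\big)$, which after collecting terms and using the identity $(\nu-1)+(1-\rho)=\nu-\rho$ collapses to the clean condition
\[
\sum_{\nu=2}^{\infty} \frac{\nu-\rho}{1-\rho}\, c_\nu |z|^{\nu-1} \leq 1.
\]
This is the starlike analogue of (\ref{fun123}), and the simplification $(\nu-1)+(1-\rho)=\nu-\rho$ is exactly what makes the two coefficient families match up.

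Finally I would invoke Theorem \ref{thm21} in its normalized form, namely $\sum_{\nu=2}^{\infty} \frac{[\nu]_{p,q}^{\kappa}(\nu-\eta)\big\{1+([\nu]_{p,q}-1)\tau\big\}[1+(\nu-1)\tau]^{\xi}}{1-\eta}\, c_\nu \leq 1$, and compare coefficient-wise: the displayed condition is guaranteed once, for every $\nu \geq 2$,
\[
\frac{\nu-\rho}{1-\rho}\,|z|^{\nu-1} \leq \frac{[\nu]_{p,q}^{\kappa}(\nu-\eta)\big\{1+([\nu]_{p,q}-1)\tau\big\}[1+(\nu-1)\tau]^{\xi}}{1-\eta}.
\]
Solving this for $|z|$ and taking the infimum over $\nu$ produces exactly the radius $r_2$ of (\ref{fun222}), and sharpness follows by testing the extremal function (\ref{fun1}) as in Theorem \ref{thm28}. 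I expect no genuine obstacle here; the only mild point absent from the close-to-convex case is that $\varphi(z)$ now sits in the denominator, so I must keep the radius small enough that $1 - \sum_{\nu=2}^{\infty} c_\nu|z|^{\nu-1} > 0$ (hence $\varphi(z)\neq 0$) before clearing it, which is automatic on $|z| < r_2$.
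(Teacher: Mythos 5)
Your proposal is correct and follows essentially the same route as the paper: bound $\bigl|z\varphi'(z)/\varphi(z)-1\bigr|$ by $\sum(\nu-1)c_\nu|z|^{\nu-1}/\bigl(1-\sum c_\nu|z|^{\nu-1}\bigr)$, reduce to the condition $\sum\frac{\nu-\rho}{1-\rho}c_\nu|z|^{\nu-1}\leq 1$, and compare coefficient-wise with Theorem \ref{thm21} to solve for $|z|$. Your added remark about keeping the denominator positive is a sensible precaution that the paper leaves implicit.
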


\begin{proof}
We need to show that
\begin{equation*}
\left\vert\frac{z\varphi^{\prime}(z)}{\varphi(z)}-1\right\vert \leq 1-\rho \ \text{ \ for \ } |z|<r_2,
\end{equation*}%
where $r_2$ is given by (\ref{fun222}). Indeed, definition (\ref{ieq1n}) implies that
\begin{equation*}
\left\vert\frac{z\varphi^{\prime}(z)}{\varphi(z)}-1\right\vert \leq \frac{\sum\limits_{\nu=2}^{\infty } (\nu-1) c_\nu |z|^{\nu-1}}{1-\sum\limits_{\nu=2}^{\infty } c_\nu |z|^{\nu-1}}.
\end{equation*}%

Thus,
\begin{equation*}
\left\vert\frac{z\varphi^{\prime}(z)}{\varphi(z)}-1\right\vert \leq 1-\rho,
\end{equation*}%
if
\begin{equation}
\sum\limits_{\nu=2}^{\infty } \left( \frac{\nu-\rho}{1-\rho} \right) c_\nu |z|^{\nu-1} \leq 1. \label{fun444}
\end{equation}%

But, by Theorem \ref{thm21}, (\ref{fun444}) holds true if
\begin{equation}
\left( \frac{\nu-\rho}{1-\rho} \right) |z|^{\nu-1} \leq \frac{[\nu]_{p,q}^{\kappa}(\nu-\eta)\big\{1+([\nu]_{p,q}-1)\tau\big\}[1+(\nu-1)\tau ]^{\xi}}{1-\eta},
\end{equation}%
that is, if
\begin{equation}
|z|\leq\left( \frac{(1-\rho)[\nu]_{p,q}^{\kappa}(\nu-\eta)\big\{1+([\nu]_{p,q}-1)\tau\big\}[1+(\nu-1)\tau ]^{\xi}}{(\nu-\rho)(1-\eta)}\right) ^{1 / (\nu-1)} \ \ \ \ (\nu\geq2). \label{fun555}
\end{equation}%

Theorem \ref{thm29} follows readily from (\ref{fun555}).
\end{proof}

\begin{corollary}
Let the function $\varphi(z)$, defined by (\ref{ieq1n}), be in the class $\mathcal{P}_{p,q}^{\xi ,\kappa}(\tau, \eta)$. Then $\varphi(z)$ is convex of order $\rho (0\leq \rho <1)$ in $|z|< r_3$, where
\begin{equation}
r_{3}:=\inf_{\nu}\left( \frac{(1-\rho)\nu^{-1}[\nu]_{p,q}^{\kappa}(\nu-\eta)\big\{1+([\nu]_{p,q}-1)\tau\big\}[1+(\nu-1)\tau ]^{\xi}}{(\nu-\rho)(1-\eta)}\right) ^{1 / (\nu-1)} \ \ \ \ (\nu\geq2).
\end{equation}%

The result is sharp, with the extremal function $\varphi(z)$ given by (\ref{fun1}).
\end{corollary}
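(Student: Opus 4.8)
The plan is to mirror the proof of Theorem \ref{thm29}, replacing the starlikeness criterion by the convexity-of-order-$\rho$ criterion. Recall that $\varphi$ is convex of order $\rho$ precisely when $\mathrm{Re}\{1+z\varphi''(z)/\varphi'(z)\}>\rho$, for which a sufficient condition is $|z\varphi''(z)/\varphi'(z)|\leq 1-\rho$. So first I would differentiate the representation (\ref{ieq1n}) to obtain $z\varphi''(z)=-\sum_{\nu=2}^{\infty}\nu(\nu-1)c_\nu z^{\nu-1}$ and $\varphi'(z)=1-\sum_{\nu=2}^{\infty}\nu c_\nu z^{\nu-1}$, and thereby bound
\[
\left\vert\frac{z\varphi''(z)}{\varphi'(z)}\right\vert \leq \frac{\sum_{\nu=2}^{\infty}\nu(\nu-1)c_\nu|z|^{\nu-1}}{1-\sum_{\nu=2}^{\infty}\nu c_\nu|z|^{\nu-1}}.
\]

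Next I would observe that this right-hand side is at most $1-\rho$ exactly when $\sum_{\nu=2}^{\infty}\big[\nu(\nu-1)+(1-\rho)\nu\big]c_\nu|z|^{\nu-1}\leq 1-\rho$. The bracket factors as $\nu(\nu-\rho)$, so the requirement collapses to the single clean inequality
\[
\sum_{\nu=2}^{\infty}\frac{\nu(\nu-\rho)}{1-\rho}\,c_\nu|z|^{\nu-1}\leq 1,
\]
which is the convexity analogue of (\ref{fun444}); note that, compared with the starlikeness case, the extra factor $\nu$ in the numerator is precisely what produces the additional $\nu^{-1}$ appearing in $r_3$ as opposed to $r_2$. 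Then I would invoke Theorem \ref{thm21}, which supplies $\sum_{\nu=2}^{\infty}\frac{[\nu]_{p,q}^{\kappa}(\nu-\eta)\{1+([\nu]_{p,q}-1)\tau\}[1+(\nu-1)\tau]^{\xi}}{1-\eta}\,c_\nu\leq 1$. Hence the displayed inequality holds provided it is dominated term-by-term, i.e. provided
\[
\frac{\nu(\nu-\rho)}{1-\rho}|z|^{\nu-1}\leq \frac{[\nu]_{p,q}^{\kappa}(\nu-\eta)\{1+([\nu]_{p,q}-1)\tau\}[1+(\nu-1)\tau]^{\xi}}{1-\eta}
\]
for every $\nu\geq 2$. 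Solving for $|z|$ yields exactly the quantity defining $r_3$, and taking the infimum over $\nu$ finishes the argument; sharpness is witnessed by the extremal function (\ref{fun1}), just as in Theorems \ref{thm28} and \ref{thm29}.

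There is no genuine obstacle here: the computation is routine and runs parallel to Theorem \ref{thm29}. The only point deserving care is the factorization $\nu(\nu-1)+(1-\rho)\nu=\nu(\nu-\rho)$, which is what delivers the radius in closed form; everything else is bookkeeping, together with the standard (slightly informal but customary) passage from the global bound of Theorem \ref{thm21} to the term-by-term domination above. As a useful sanity check I would also note the Alexander-type relation that $\varphi$ is convex of order $\rho$ if and only if $z\varphi'(z)$ is starlike of order $\rho$: since $z\varphi'(z)=z-\sum_{\nu=2}^{\infty}\nu c_\nu z^{\nu}$ has $\nu$-th coefficient $\nu c_\nu$, applying Theorem \ref{thm29} to $z\varphi'(z)$ recovers $r_3$ from $r_2$ upon replacing $c_\nu$ by $\nu c_\nu$, which confirms the extra factor $\nu^{-1}$.
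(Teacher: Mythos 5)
Your proof is correct. The paper itself gives no argument for this corollary---it is stated immediately after Theorem \ref{thm29} with the tacit understanding that it follows by the same method (or, equivalently, by the Alexander-type relation you mention, applied to $z\varphi'(z)=z-\sum_{\nu\ge 2}\nu c_\nu z^{\nu}$, which replaces $c_\nu$ by $\nu c_\nu$ in Theorem \ref{thm29} and produces the extra $\nu^{-1}$). Your direct computation---bounding $|z\varphi''(z)/\varphi'(z)|$, using the factorization $\nu(\nu-1)+(1-\rho)\nu=\nu(\nu-\rho)$ to reduce to $\sum_{\nu\ge 2}\frac{\nu(\nu-\rho)}{1-\rho}c_\nu|z|^{\nu-1}\le 1$, and then dominating term-by-term via Theorem \ref{thm21}---runs exactly parallel to the paper's proofs of Theorems \ref{thm28} and \ref{thm29}, and the only informality (a non-strict inequality where the definition of convexity of order $\rho$ requires a strict one) is one the paper itself commits throughout. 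Nothing is missing.
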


\section{Integral means inequality}
For any two functions $\varphi$ and $\psi$ analytic in $\Omega$, $\varphi$ is said to be subordinate to $\psi$ in $\Omega$, written $\varphi(z)$ $\prec$ $\psi(z)$, if there exists a Schwarz function $\omega(z)$, analytic in $\Omega$, with
\begin{center}
$\omega(0) = 0$ and $\left\vert \omega(z)\right\vert <1$ for all $z \in\Omega$,
\end{center}
such that $\varphi(z)=\psi\left(\omega(z)\right)$ for all $z \in\Omega$. Furthermore, if the function $\psi$ is univalent in $\Omega$, then we have the following equivalence \cite{C2}:
\[
\varphi(z)\prec \psi(z)\Leftrightarrow \varphi(0)=\psi(0)\text{ and }\varphi(\Omega)\subset
\psi(\Omega).
\]

In order to prove the integral means inequality for functions belonging to the class $\mathcal{P}_{p,q}^{\xi ,\kappa}(\tau, \eta)$, we need the following subordination result due to Littlewood \cite{wood}.
\begin{lemma} \label{l1}
If the functions $\varphi$ and $\psi$ are analytic in $\Omega$ with $\varphi(z)\prec \psi(z)$, then for $\gamma>0$ and $z=re^{i\theta}(0<r<1)$,
\begin{eqnarray} \label{leq1}
\int\limits_{0}^{2\pi}\left\vert \varphi(z)\right\vert ^{\gamma}d\theta \leq \int\limits_{0}^{2\pi }\left\vert \psi(z)\right\vert ^{\gamma}d\theta.
\end{eqnarray}
\end{lemma}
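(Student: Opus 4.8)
The plan is to use the subordination hypothesis to write $\varphi=\psi\circ\omega$, where $\omega$ is the Schwarz function supplied by the definition ($\omega(0)=0$, $|\omega(z)|<1$, $\varphi(z)=\psi(\omega(z))$ on $\Omega$), and then to reduce the claimed inequality to a mean-value comparison for a subharmonic function. First I would observe that for $\gamma>0$ the function $u(w):=|\psi(w)|^{\gamma}$ is subharmonic on $\Omega$: indeed $\log|\psi|$ is subharmonic and $t\mapsto e^{\gamma t}$ is convex and increasing, so $u=e^{\gamma\log|\psi|}$ is subharmonic, and this remains valid at the zeros of $\psi$. The substitution $\varphi=\psi\circ\omega$ recasts the left-hand side of (\ref{leq1}) as $\int_{0}^{2\pi}u(\omega(re^{i\theta}))\,d\theta$ and the right-hand side as $\int_{0}^{2\pi}u(re^{i\theta})\,d\theta$, so it suffices to establish the subordination principle $\int_{0}^{2\pi}u(\omega(re^{i\theta}))\,d\theta\le\int_{0}^{2\pi}u(re^{i\theta})\,d\theta$ for subharmonic $u$ and a Schwarz function $\omega$ fixing the origin.

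The geometric input that drives the argument is the Schwarz lemma: since $\omega$ is analytic, $\omega(0)=0$, and maps $\Omega$ into itself, one has $|\omega(z)|\le|z|$. Hence for fixed $r<1$ the map $\omega$ carries the closed disk $\{|z|\le r\}$ into $\{|w|\le r\}$, a compact subset of $\Omega$ on which $\psi$, and therefore $u$, is well behaved. Next I would replace $u$ by its harmonic majorant on this disk: let $U$ denote the Poisson integral on $\{|w|<r\}$ of the boundary data $u(re^{i\theta})$, so that $U$ is harmonic, $U\ge u$ throughout $\{|w|\le r\}$ (a subharmonic function is dominated by the harmonic function sharing its boundary values), and $U(0)=\frac{1}{2\pi}\int_{0}^{2\pi}u(re^{i\theta})\,d\theta$ by the value of the Poisson integral at the center.

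The final step combines harmonicity with the normalization $\omega(0)=0$. Because $\omega$ maps $\{|z|<r\}$ into $\{|w|<r\}$ and $U$ is harmonic there, the composition $U\circ\omega$ is harmonic on $\{|z|<r\}$ and continuous up to $|z|=r$; the mean-value property then gives $\frac{1}{2\pi}\int_{0}^{2\pi}U(\omega(re^{i\theta}))\,d\theta=(U\circ\omega)(0)=U(\omega(0))=U(0)$. Combining the domination $u\circ\omega\le U\circ\omega$ with these identities yields
\[
\frac{1}{2\pi}\int_{0}^{2\pi}u(\omega(re^{i\theta}))\,d\theta\le\frac{1}{2\pi}\int_{0}^{2\pi}U(\omega(re^{i\theta}))\,d\theta=U(0)=\frac{1}{2\pi}\int_{0}^{2\pi}u(re^{i\theta})\,d\theta,
\]
which is exactly (\ref{leq1}) after translating back through $u=|\psi|^{\gamma}$ and $\varphi=\psi\circ\omega$.

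I expect the main obstacle to be the middle step, namely justifying the existence and the domination property of the harmonic majorant $U$ and verifying that every integral over $|z|=r$ is legitimate, i.e.\ that $\omega$ does not push boundary points outside the closed disk where $U$ and $u$ are controlled. This is precisely where the Schwarz estimate $|\omega(z)|\le|z|$ is indispensable, and it is also the point at which the hypothesis $\omega(0)=0$ (equivalently $\varphi(0)=\psi(0)$, built into the notion of subordination) enters, through the mean-value property evaluated at the center.
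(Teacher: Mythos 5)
Your argument is correct, but note that the paper does not prove this lemma at all: it is quoted as a known subordination result of Littlewood with a citation to the 1925 paper, so there is no internal proof to compare against. What you have written is the classical proof of Littlewood's subordination theorem (the one found, e.g., in Duren's \emph{Theory of $H^p$ Spaces}, Theorem 1.7): factor $\varphi=\psi\circ\omega$ through the Schwarz function, observe that $|\psi|^{\gamma}=\exp(\gamma\log|\psi|)$ is subharmonic for every $\gamma>0$ (including at the zeros of $\psi$, where the sub-mean-value inequality is trivial), replace it on $\{|w|\le r\}$ by its Poisson harmonic majorant $U$, and use the Schwarz lemma $|\omega(z)|\le|z|$ together with the mean-value property of the harmonic function $U\circ\omega$ at the origin, where $\omega(0)=0$ is exactly what pins the value down to $U(0)=\frac{1}{2\pi}\int_0^{2\pi}u(re^{i\theta})\,d\theta$. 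All the steps you flag as potential obstacles are standard and go through: $u$ is continuous on the compact disk $\{|w|\le r\}\subset\Omega$, so the Poisson integral $U$ exists, dominates $u$ by the maximum principle applied to the subharmonic function $u-U$, and $U\circ\omega$ is continuous up to $|z|=r$ because $\omega$ maps that circle into $\{|w|\le r\}$. One small remark: the paper's definition of $\prec$ is already phrased via the existence of the Schwarz function $\omega$ (univalence of $\psi$ is only needed for the equivalence with the range-inclusion formulation), so you are entitled to $\omega$ from the outset and nothing further needs to be justified there.
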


Applying Theorem \ref{thm21} with the extremal function and Lemma \ref{l1}, we achieve the following theorem.
\begin{theorem} \label{im}
Let $\Big\{ [\nu]_{p,q}^{\kappa}(\nu-\eta)\big\{1+([\nu]_{p,q}-1)\tau\big\}[1+(\nu-1)\tau ]^{\xi} \Big\} _{\nu=2}^{\infty }$ be a nondecreasing sequence. If $\varphi \in\mathcal{P}_{p,q}^{\xi ,\kappa}(\tau, \eta)$, then
\begin{equation} \label{teq1}
\int\limits_{0}^{2\pi }\left\vert \varphi(re^{i\theta })\right\vert ^{\gamma}d\theta \leq \int\limits_{0}^{2\pi }\left\vert \varphi_{*}(re^{i\theta})\right\vert ^{\gamma}d\theta \ \ \ \ (0<r<1;\gamma>0),
\end{equation}
where
\begin{equation} \label{teq2}
\varphi_{*}(z)=z - \frac{1-\eta }{[2]_{p,q}^{\kappa}(2-\eta)\big\{1+([2]_{p,q}-1)\tau\big\}(1+\tau)^{\xi}} \ z^{2}.
\end{equation}

\end{theorem}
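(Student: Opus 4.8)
The goal is to prove the integral means inequality (\ref{teq1}) using Littlewood's subordination lemma (Lemma \ref{l1}). The plan is to exhibit a subordination $\varphi(z)\prec\varphi_*(z)$ — or more precisely a subordination between suitable power-type transforms of the two functions — so that the hypotheses of Lemma \ref{l1} apply directly. Write $\varphi(z)=z-\sum_{\nu=2}^{\infty}c_\nu z^\nu$ as in (\ref{ieq1n}), and set
\[
\varphi_*(z)=z-\frac{1-\eta}{[2]_{p,q}^{\kappa}(2-\eta)\big\{1+([2]_{p,q}-1)\tau\big\}(1+\tau)^{\xi}}\,z^2=z-Az^2,
\]
where $A$ denotes the displayed coefficient. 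Since $\int_0^{2\pi}|\varphi(re^{i\theta})|^\gamma d\theta=\int_0^{2\pi}|1-\sum_{\nu\ge2}c_\nu z^{\nu-1}|^\gamma r^\gamma\,d\theta$ (factoring out $z=re^{i\theta}$), it suffices to establish the subordination
\[
1-\sum_{\nu=2}^{\infty}c_\nu z^{\nu-1}\;\prec\;1-A z,
\]
because then Lemma \ref{l1} applied with exponent $\gamma$ yields (\ref{teq1}) after multiplying through by $r^\gamma$.

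\textbf{Key steps.} First I would reduce the claim to the subordination just stated by pulling the common factor $|z|^\gamma$ out of both integrals. Second, to verify the subordination I must produce a Schwarz function $\omega$ with $\omega(0)=0$, $|\omega(z)|<1$, satisfying $1-\sum_{\nu\ge2}c_\nu z^{\nu-1}=1-A\,\omega(z)$; equivalently
\[
\omega(z)=\frac{1}{A}\sum_{\nu=2}^{\infty}c_\nu z^{\nu-1}.
\]
Clearly $\omega(0)=0$. The essential point is the bound $|\omega(z)|<1$ for $z\in\Omega$: using $|z|<1$ we have
\[
|\omega(z)|\le\frac{1}{A}\sum_{\nu=2}^{\infty}c_\nu\,|z|^{\nu-1}\le\frac{1}{A}\sum_{\nu=2}^{\infty}c_\nu,
\]
so it suffices to show $\sum_{\nu\ge2}c_\nu\le A$. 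Third, I would invoke the nondecreasing hypothesis on the sequence $\big\{[\nu]_{p,q}^{\kappa}(\nu-\eta)\{1+([\nu]_{p,q}-1)\tau\}[1+(\nu-1)\tau]^{\xi}\big\}_{\nu=2}^{\infty}$ together with the coefficient inequality from Theorem \ref{thm21}. Because the $\nu=2$ term is the smallest, for each $\nu\ge2$ we have
\[
\Big([2]_{p,q}^{\kappa}(2-\eta)\{1+([2]_{p,q}-1)\tau\}(1+\tau)^{\xi}\Big)c_\nu\le[\nu]_{p,q}^{\kappa}(\nu-\eta)\{1+([\nu]_{p,q}-1)\tau\}[1+(\nu-1)\tau]^{\xi}\,c_\nu,
\]
and summing over $\nu$ and applying (\ref{theq21}) gives $\big(1/A\big)\sum_{\nu\ge2}c_\nu\le1$, i.e. $\sum_{\nu\ge2}c_\nu\le A$, as required.

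\textbf{Main obstacle.} The delicate part is precisely the third step: the bound $|\omega(z)|<1$ is where the nondecreasing hypothesis is indispensable. Without monotonicity of the coefficient sequence one cannot compare each $c_\nu$-term against the $\nu=2$ normalizing factor $A$, and the clean estimate $\sum c_\nu\le A$ would fail. I would therefore carry out this comparison carefully, making explicit that replacing the $\nu$-th weight by its $\nu=2$ value only decreases the left-hand side under the monotonicity assumption, so that Theorem \ref{thm21} can be applied term by term. Once $|\omega(z)|<1$ is secured, the subordination is immediate and Lemma \ref{l1} finishes the proof.
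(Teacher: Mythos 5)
Your proposal follows essentially the same route as the paper's proof: reduce (\ref{teq1}) to the subordination $1-\sum_{\nu\ge 2}c_{\nu}z^{\nu-1}\prec 1-Az$ (with $A$ the coefficient of $z^{2}$ in $\varphi_{*}$), take $\omega(z)=A^{-1}\sum_{\nu\ge 2}c_{\nu}z^{\nu-1}$, and control it by combining the nondecreasing hypothesis with Theorem \ref{thm21}. The only cosmetic difference is that the paper keeps a factor of $|z|$ in the estimate so as to get $|\omega(z)|\le |z|<1$ strictly, whereas your chain stops at $|\omega(z)|\le 1$; retain one power of $|z|$ (or observe the inequality is strict whenever some $c_{\nu}>0$) to secure the strict Schwarz bound.
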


\begin{proof}
Let the function $\varphi(z)$, defined by (\ref{ieq1n}), be in the class $\mathcal{P}_{p,q}^{\xi ,\kappa}(\tau, \eta)$. Then we need to show that
\begin{equation} \label{eq1}
\int\limits_{0}^{2\pi}\left\vert 1-\sum\limits_{\nu=2}^{\infty}c_{\nu}z^{\nu-1}\right\vert ^{\gamma}d\theta \leq \int\limits_{0}^{2\pi}\left\vert 1-\frac{1-\eta }{[2]_{p,q}^{\kappa}(2-\eta)\big\{1+([2]_{p,q}-1)\tau\big\}(1+\tau)^{\xi}} \ z \right\vert ^{\gamma}d\theta.
\end{equation}

Thus, by applying Lemma \ref{l1}, it would suffice to show that
\begin{equation} \label{eq2}
1-\sum\limits_{\nu=2}^{\infty }c_{\nu}z^{\nu-1}\prec 1-\frac{1-\eta}{[2]_{p,q}^{\kappa}(2-\eta)\big\{1+([2]_{p,q}-1)\tau\big\}(1+\tau)^{\xi}} \ z.
\end{equation}

If the subordination (\ref{eq2}) holds true, then there exists an analytic function $\omega $ with $\omega (0)=0$ and $\left\vert \omega (z)\right\vert <1$ such that
\begin{equation} \label{eq3}
1-\sum\limits_{\nu=2}^{\infty }c_{\nu}z^{\nu-1} = 1-\frac{1-\eta}{[2]_{p,q}^{\kappa}(2-\eta)\big\{1+([2]_{p,q}-1)\tau\big\}(1+\tau)^{\xi}} \ \omega(z).
\end{equation}

Using Theorem \ref{thm21}, we have
\begin{eqnarray*}
\left\vert \omega (z)\right\vert &=& \left\vert \sum\limits_{\nu=2}^{\infty }\frac{[2]_{p,q}^{\kappa}(2-\eta)\big\{1+([2]_{p,q}-1)\tau\big\}(1+\tau)^{\xi}}{1-\eta }c_{\nu}z^{\nu-1}\right\vert \\
&\leq& \left\vert z\right\vert \sum\limits_{\nu=2}^{\infty } \frac{[\nu]_{p,q}^{\kappa}(\nu-\eta)\big\{1+([\nu]_{p,q}-1)\tau\big\}[1+(\nu-1)\tau ]^{\xi}%
}{1-\eta } \ c_{\nu} \leq \left\vert z\right\vert <1,
\end{eqnarray*}%
which proves the subordination (\ref{eq2}). This completes the proof of Theorem \ref{im}.
\end{proof}

\section{Inclusion relations involving neighborhoods}
The concept of neighborhoods was first initiated by Goodman \cite{good}, and then generalized by Ruscheweyh \cite{Rush}. In this section, we will investigate the $(\nu,\delta)$-neighborhoods of the subclass $\mathcal{P}_{p,q}^{\xi ,\kappa}(\tau, \eta)$.

First, we define $(\nu,\delta)$-neighborhoods of the function $\varphi\in\mathcal{T}$ by
\begin{equation} \label{eq4}
N_{\nu,\delta}(\varphi) = \left\{ \psi\in\mathcal{T}:\psi(z)=z-\sum\limits_{\nu=2}^{\infty} d_{\nu} z^{\nu} \ \text{and } \ \sum\limits_{\nu=2}^{\infty}\nu\left\vert c_{\nu}-d_{\nu}\right\vert \leq \delta \right\}.
\end{equation}

In particular, for the identity function $e(z) = z$, we have
\begin{equation} \label{eq5}
N_{\nu,\delta}(e) = \left\{ \psi\in\mathcal{T}:\psi(z)=z-\sum\limits_{\nu=2}^{\infty} d_{\nu} z^{\nu} \ \text{and } \ \sum\limits_{\nu=2}^{\infty}\nu d_{\nu} \leq \delta \right\}.
\end{equation}

Our first inclusion relation is given by
\begin{theorem}
\label{t2} If
\begin{equation} \label{teq3}
\delta =\frac{2(1-\eta )}{[2]_{p,q}^{\kappa}(2-\eta)\big\{1+([2]_{p,q}-1)\tau\big\}(1+\tau)^{\xi}},
\end{equation}
then
\begin{equation} \label{teq4}
\mathcal{P}_{p,q}^{\xi ,\kappa}(\tau, \eta) \subset N_{\nu,\delta }(e).
\end{equation}

\end{theorem}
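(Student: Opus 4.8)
The plan is to reduce the inclusion to a single coefficient inequality and then compare it termwise against the bound furnished by Theorem~\ref{thm21}. By the definition \eqref{eq5} of $N_{\nu,\delta}(e)$, a function $\varphi(z)=z-\sum_{\nu=2}^{\infty}c_\nu z^\nu\in\mathcal{T}$ lies in $N_{\nu,\delta}(e)$ precisely when $\sum_{\nu=2}^{\infty}\nu c_\nu\le\delta$. So I would fix an arbitrary $\varphi\in\mathcal{P}_{p,q}^{\xi,\kappa}(\tau,\eta)$ and aim to establish $\sum_{\nu=2}^{\infty}\nu c_\nu\le\delta$ with $\delta$ as in \eqref{teq3}.

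Write $B_\nu := [\nu]_{p,q}^{\kappa}(\nu-\eta)\{1+([\nu]_{p,q}-1)\tau\}[1+(\nu-1)\tau]^{\xi}$, so that Theorem~\ref{thm21} gives $\sum_{\nu=2}^{\infty}B_\nu c_\nu\le 1-\eta$, while $\delta = 2(1-\eta)/B_2$. The crux is the termwise estimate $B_\nu\ge \tfrac{\nu}{2}B_2$ for every $\nu\ge2$: granting it, and using $c_\nu\ge0$,
\[
\frac{B_2}{2}\sum_{\nu=2}^{\infty}\nu c_\nu \le \sum_{\nu=2}^{\infty}B_\nu c_\nu \le 1-\eta,
\]
whence $\sum_{\nu=2}^{\infty}\nu c_\nu\le 2(1-\eta)/B_2=\delta$, and therefore $\varphi\in N_{\nu,\delta}(e)$, which is exactly \eqref{teq4}.

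To prove $B_\nu\ge\tfrac{\nu}{2}B_2$ I would factor $B_\nu/\nu=(1-\eta/\nu)\,C_\nu$ with $C_\nu:=[\nu]_{p,q}^{\kappa}\{1+([\nu]_{p,q}-1)\tau\}[1+(\nu-1)\tau]^{\xi}$, and show each factor already dominates its value at $\nu=2$. The factor $1-\eta/\nu$ is increasing in $\nu$, so $1-\eta/\nu\ge 1-\eta/2=(2-\eta)/2$ for $\nu\ge2$; it then suffices that $C_\nu\ge C_2$. Here $[1+(\nu-1)\tau]^{\xi}$ is nondecreasing in $\nu$ for $\tau\ge0$, so the whole of $C_\nu$ is nondecreasing once the $(p,q)$-bracket factors $[\nu]_{p,q}^{\kappa}$ and $1+([\nu]_{p,q}-1)\tau$ are nondecreasing, i.e. once $\{[\nu]_{p,q}\}_{\nu\ge2}$ is nondecreasing --- which is in the same spirit as the nondecreasing-sequence hypothesis underlying Theorem~\ref{im}. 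Combining the two factors gives $B_\nu/\nu=(1-\eta/\nu)\,C_\nu\ge\tfrac{2-\eta}{2}C_2=\tfrac12 B_2$, as required.

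I expect the monotonicity of the bracket-dependent factor $C_\nu$ to be the one genuinely delicate point. The remaining ingredients are routine: the $\tau$-power factor is manifestly nondecreasing, the elementary inequality $(\nu-\eta)/\nu\ge(2-\eta)/2$ is immediate, and the final summation is just Theorem~\ref{thm21}. What truly needs care is the minimality of $C_\nu$ at $\nu=2$, which rests on $[\nu]_{p,q}\ge[2]_{p,q}$; I would isolate this as a lemma (or record the standing monotonicity assumption explicitly) rather than invoke it silently, since for general $0<p<q\le1$ the bracket $[\nu]_{p,q}$ need not be monotone, and it is precisely this hypothesis that makes the value $B_2$ in $\delta$ the correct normalizing constant.
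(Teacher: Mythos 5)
Your proposal is correct and follows essentially the same route as the paper: both reduce the inclusion to the single inequality $\sum_{\nu\ge2}\nu c_\nu\le\delta$ and derive it from Theorem~\ref{thm21} by comparing the $\nu$-th coefficient against its value at $\nu=2$ (the paper packages this as two successive sum estimates, first for $\sum c_\nu$ and then for $\sum\nu c_\nu$, rather than as your single termwise bound $B_\nu\ge\tfrac{\nu}{2}B_2$, but the content and the resulting constant are identical). Your caveat about the monotonicity of $[\nu]_{p,q}$ is well taken: the paper invokes $C_\nu\ge C_2$ silently, and since $[\nu]_{p,q}$ need not be nondecreasing for $0<p<q\le1$, the hypothesis you isolate is genuinely needed there as well.
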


\begin{proof}
Let the function $\varphi(z)$ be in the class $\mathcal{P}_{p,q}^{\xi ,\kappa}(\tau, \eta)$. Then Theorem \ref{thm21} yields
\begin{equation} \label{eq6}
[2]_{p,q}^{\kappa}(2-\eta)\big\{1+([2]_{p,q}-1)\tau\big\}(1+\tau)^{\xi}\sum\limits_{\nu=2}^{\infty } c_{\nu} \leq 1-\eta,
\end{equation}
which implies
\begin{equation} \label{eq7}
\sum\limits_{\nu=2}^{\infty } c_{\nu} \leq \frac{1-\eta}{[2]_{p,q}^{\kappa}(2-\eta)\big\{1+([2]_{p,q}-1)\tau\big\}(1+\tau)^{\xi}}.
\end{equation}

On the other hand, from (\ref{theq21}) and (\ref{eq7}), we have
\begin{eqnarray*} \label{eq8}
(1-\eta)[2]_{p,q}^{\kappa}\big\{1&+&([2]_{p,q}-1)\tau\big\}(1+\tau)^{\xi}\sum\limits_{\nu=2}^{\infty } \nu c_{\nu} \\
&\leq& 1-\eta-\eta[2]_{p,q}^{\kappa}\big\{1+([2]_{p,q}-1)\tau\big\}(1+\tau)^{\xi}\sum\limits_{\nu=2}^{\infty } c_{\nu} \\
&\leq& \frac{2(1-\eta)^2}{2-\eta}.
\end{eqnarray*}

Hence,
\begin{eqnarray*} \label{eq9}
\sum\limits_{\nu=2}^{\infty } \nu c_{\nu} \leq \frac{2(1-\eta)}{[2]_{p,q}^{\kappa}(2-\eta)\big\{1+([2]_{p,q}-1)\tau\big\}(1+\tau)^{\xi}}=\delta,
\end{eqnarray*}
which, by the definition (\ref{eq5}), confirms the inclusion (\ref{teq4}) asserted by Theorem \ref{t2}.
\end{proof}

Next, we determine the neighborhood for the class $\mathcal{P}_{p,q}^{\xi ,\kappa}(\tau, \eta)$, which we define as follows. \\
A function $\varphi\in\mathcal{T}$ is said to be in the class $\mathcal{P}_{p,q}^{\xi ,\kappa}(\tau, \eta)$ if there exists a function $\psi \in\mathcal{P}_{p,q}^{\xi ,\kappa}(\tau, \eta)$ such that
\begin{eqnarray*} \label{eq10}
\left\vert \frac{\varphi(z)}{\psi(z)} -1\right\vert < 1-\varepsilon, \ \ \ \ (0\leq\varepsilon <1; z\in\Omega).
\end{eqnarray*}

\begin{theorem}
\label{t3} If $\psi \in\mathcal{P}_{p,q}^{\xi ,\kappa}(\tau, \eta)$ and
\begin{equation} \label{teq5}
\varepsilon =1-\frac{\delta \ [2]_{p,q}^{\kappa}(2-\eta)\big\{1+([2]_{p,q}-1)\tau\big\}(1+\tau)^{\xi}}{2\Big([2]_{p,q}^{\kappa}(2-\eta)\big\{1+([2]_{p,q}-1)\tau\big\}(1+\tau)^{\xi}+\eta-1\Big)},
\end{equation}
then
\begin{equation} \label{teq6}
N_{\nu,\delta }(\psi) \subset \mathcal{P}_{p,q}^{\xi ,\kappa}(\tau, \eta).
\end{equation}

\end{theorem}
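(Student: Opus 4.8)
The plan is to take an arbitrary member $\varphi$ of $N_{\nu,\delta}(\psi)$ and verify directly the defining inequality of the ``nearly'' class introduced immediately before the statement, namely $\left|\varphi(z)/\psi(z)-1\right|<1-\varepsilon$ for all $z\in\Omega$, with $\varepsilon$ prescribed by (\ref{teq5}). Writing $\varphi(z)=z-\sum_{\nu=2}^{\infty}d_{\nu}z^{\nu}$ and $\psi(z)=z-\sum_{\nu=2}^{\infty}c_{\nu}z^{\nu}$, and abbreviating the recurring quantity by $\Theta:=[2]_{p,q}^{\kappa}(2-\eta)\big\{1+([2]_{p,q}-1)\tau\big\}(1+\tau)^{\xi}$, the target bound reads $1-\varepsilon=\delta\Theta/\big(2(\Theta+\eta-1)\big)$.

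First I would cancel a factor of $z$ in the quotient to obtain
\[
\frac{\varphi(z)}{\psi(z)}-1=\frac{\sum_{\nu=2}^{\infty}(c_{\nu}-d_{\nu})z^{\nu-1}}{1-\sum_{\nu=2}^{\infty}c_{\nu}z^{\nu-1}},
\]
so that, for $z\in\Omega$,
\[
\left|\frac{\varphi(z)}{\psi(z)}-1\right|\leq\frac{\sum_{\nu=2}^{\infty}|c_{\nu}-d_{\nu}|\,|z|^{\nu-1}}{1-\sum_{\nu=2}^{\infty}c_{\nu}|z|^{\nu-1}}.
\]
The point of this rearrangement is that every exponent is now of the form $|z|^{\nu-1}<1$, which lets me bound the numerator and the denominator separately by their supremal boundary values.

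For the numerator, the neighborhood condition $\sum_{\nu=2}^{\infty}\nu|c_{\nu}-d_{\nu}|\leq\delta$ from (\ref{eq4}), combined with $\nu\geq2$, gives $\sum_{\nu=2}^{\infty}|c_{\nu}-d_{\nu}|\leq\tfrac12\sum_{\nu=2}^{\infty}\nu|c_{\nu}-d_{\nu}|\leq\delta/2$. For the denominator, I would use the hypothesis $\psi\in\mathcal{P}_{p,q}^{\xi,\kappa}(\tau,\eta)$ to invoke the coefficient estimate (\ref{eq7}) already isolated in the proof of Theorem \ref{t2}, namely $\sum_{\nu=2}^{\infty}c_{\nu}\leq(1-\eta)/\Theta$, whence $1-\sum_{\nu=2}^{\infty}c_{\nu}|z|^{\nu-1}>1-(1-\eta)/\Theta=(\Theta+\eta-1)/\Theta$. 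Dividing the two bounds yields
\[
\left|\frac{\varphi(z)}{\psi(z)}-1\right|<\frac{\delta/2}{(\Theta+\eta-1)/\Theta}=\frac{\delta\,\Theta}{2(\Theta+\eta-1)}=1-\varepsilon,
\]
which is precisely the membership criterion of the nearly class, and hence establishes the inclusion (\ref{teq6}).

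The computation is routine; the two places that demand care are the algebraic cancellation of $z$ that produces the uniform factors $|z|^{\nu-1}$ (without it the numerator cannot be controlled independently of $z$ near the boundary), and the bookkeeping that forces the quotient of the two separate bounds to collapse to exactly the value of $1-\varepsilon$ stipulated in (\ref{teq5}). The one structural hypothesis I would keep in view is the positivity $\Theta+\eta-1>0$, which is precisely the condition guaranteeing that (\ref{teq5}) defines a legitimate $\varepsilon<1$ and that the denominator estimate above is a genuinely positive number.
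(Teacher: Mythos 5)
Your argument is correct and follows essentially the same route as the paper's own proof: the neighborhood condition gives $\sum_{\nu\geq2}|c_{\nu}-d_{\nu}|\leq\delta/2$, the coefficient inequality of Theorem \ref{thm21} bounds the coefficient sum of $\psi$ by $(1-\eta)/\Theta$, and the quotient of the two estimates collapses to $1-\varepsilon$. If anything, your version is slightly more careful than the paper's (which merely ``lets $|z|\to1$''), since you bound the quotient uniformly on $\Omega$ and explicitly flag the positivity requirement $\Theta+\eta-1>0$ that both proofs implicitly rely on.
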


\begin{proof}
Let the function $\varphi(z)$ be in $N_{\nu,\delta }(\psi)$. Then definition \ref{eq4} yields
\begin{equation} \label{eq11}
\sum\limits_{\nu=2}^{\infty}\nu\left\vert c_{\nu}-b_{\nu}\right\vert \leq \delta,
\end{equation}
which implies that
\begin{equation} \label{eq12}
\sum\limits_{\nu=2}^{\infty}\left\vert c_{\nu}-d_{\nu}\right\vert \leq \frac{\delta}{2}.
\end{equation}

Next, since $\psi \in\mathcal{P}_{p,q}^{\xi ,\kappa}(\tau, \eta)$, we have [cf. equation \ref{eq7})]
\begin{equation} \label{eq13}
\sum\limits_{\nu=2}^{\infty } d_{\nu} \leq \frac{1-\eta}{[2]_{p,q}^{\kappa}(2-\eta)\big\{1+([2]_{p,q}-1)\tau\big\}(1+\tau)^{\xi}},
\end{equation}

Letting $|z|\rightarrow 1$, so that
\begin{eqnarray*} \label{eq14}
\left\vert \frac{\varphi(z)}{\psi(z)} -1\right\vert &\leq& \frac{\sum\limits_{\nu=2}^{\infty}\left\vert c_{\nu}-d_{\nu}\right\vert}{1-\sum\limits_{\nu=2}^{\infty}\left\vert d_{\nu}\right\vert } \\
&\leq& \frac{\delta}{2} \left(\frac{[2]_{p,q}^{\kappa}(2-\eta)\big\{1+([2]_{p,q}-1)\tau\big\}(1+\tau)^{\xi}}{[2]_{p,q}^{\kappa}(2-\eta)\big\{1+([2]_{p,q}-1)\tau\big\}(1+\tau)^{\xi}+\eta-1} \right) \\
&\leq& 1-\varepsilon,
\end{eqnarray*}
provided that $\varepsilon$ is given completely by (\ref{teq5}). Thus by the above definition, $\varphi \in\mathcal{P}_{p,q}^{\xi ,\kappa}(\tau, \eta)$, which completes the proof.
\end{proof}



\end{document}